\documentclass{amsart}
\usepackage[utf8]{inputenc}
\usepackage{amsmath}
\usepackage{amsfonts}
\usepackage{amsthm}
\usepackage{amssymb}
\usepackage[svgnames]{xcolor}
\usepackage{wasysym } 
\usepackage{tikz}
\usepackage{comment}
\usetikzlibrary{decorations.pathreplacing,calligraphy}

\newcommand{\dave}[1]{{\color{red} ($\diamondsuit$ Dave: #1)}}
\newcommand{\noah}[1]{{\color{MediumSlateBlue} ($\clubsuit$ Noah: #1)}}

\newtheorem{theorem}{Theorem}[section]
\newtheorem{corollary}{Corollary}[theorem]
\newtheorem{lemma}[theorem]{Lemma}
\newtheorem{proposition}[theorem]{Proposition}
\newtheorem{question}[theorem]{Question}

\newcommand{\gon}{\operatorname{gon}}
\newcommand{\Supp}{\operatorname{Supp}}
\newcommand{\Deg}{\operatorname{deg}}
\newcommand{\rk}{\operatorname{rk}}
\newcommand{\banana}[2]{B^{*}_{#1,#2}}
\newcommand{\bananaStar}[3]{B^{*}_{#1,#2,#3}}
\newcommand{\reduced}[1]{#1\text{-reduced}}
\newcommand{\rook}[2]{K_{#1} \square K_{#2}}
\newcommand{\PP}{\mathbb{P}}
\newcommand{\cL}{\mathcal{L}}
\newcommand{\cO}{\mathcal{O}}

\title{On the Semigroup of Graph Gonality Sequences}
\author{Austin Fessler, David Jensen, Elizabeth Kelsey, Noah Owen }

\begin{document}

\bibliographystyle{alpha}

\maketitle

\begin{abstract}
The $r$th gonality of a graph is the smallest degree of a divisor on the graph with rank $r$.  The gonality sequence of a graph is a tropical analogue of the gonality sequence of an algebraic curve.  We show that the set of truncated gonality sequences of graphs forms a semigroup under addition.  Using this, we study which triples $(x,y,z)$ can be the first 3 terms of a graph gonality sequence.  We show that nearly every such triple with $z \geq \frac{3}{2}x+2$ is the first three terms of a graph gonality sequence, and also exhibit triples where the ratio $\frac{z}{x}$ is an arbitrary rational number between 1 and 3.  In the final section, we study algebraic curves whose $r$th and $(r+1)$st gonality differ by 1, and posit several questions about graphs with this property.
\end{abstract}

\section{Introduction}
The theory of divisors on graphs, developed by Baker and Norine in \cite{Baker08, BakerNorine09}, mirrors that of divisors on curves.  Two important invariants of a divisor $D$, on either a graph or a curve, are its degree $\Deg (D)$ and its rank $\rk(D)$.  For $r \geq 1$, the $r$th \emph{gonality} of a graph is the smallest degree of a divisor of rank $r$:
\[
\gon_r (G) := \min_{D \in \mathrm{Div} (G)} \{ \Deg(D) \mid \rk (D) \geq r \} .
\]
The \emph{gonality sequence} of a graph $G$ is the sequence:
\[
\gon_1 (G), \gon_2 (G) , \gon_3 (G), \ldots
\]
In \cite{ADMYY}, the authors ask which integer sequences are the gonality sequence of some graph. 

In this paper, we approach this problem by studying the first $r$ terms of the gonality sequence.  Let
\[ 
\mathcal{G}_r := \{ \vec{x} \in \mathbb{N}^r \mid \exists \text{ a graph } G \text{ with } \gon_k (G) = x_k \text{ for all } k \leq r \} .
\]
Our first main observation is that $\mathcal{G}_r$ is a \emph{semigroup} -- that is, it is closed under addition.  We say that an element $\vec{x} \in \mathcal{G}_r$ is \emph{reducible} if it can be written as the sum of two elements in $\mathcal{G}_r$.

\begin{theorem}
\label{Thm:Closed}
The set $\mathcal{G}_r$ is closed under addition.  Moreover, if $\vec{x} \in \mathcal{G}_r$ is reducible, then for all $g$ sufficiently large, there exists a graph $G$ of genus $g$ such that $\gon_k (G) = x_k$ for all $k \leq r$.
\end{theorem}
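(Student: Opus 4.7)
The plan for closure is to construct, from graphs $G_x$ and $G_y$ realizing gonality sequences beginning $\vec{x}, \vec{y} \in \mathcal{G}_r$, a combined graph $G$ with $\gon_k(G) = x_k + y_k$ for $k \le r$.  The simplest candidate—identifying a vertex of $G_x$ with a vertex of $G_y$—is not enough on its own, since for instance the dumbbell formed by two triangles sharing a vertex has $\gon_1 = 2$ rather than $4$.  Instead, my plan is to use a more rigid gluing, such as joining $G_x$ and $G_y$ by a single bridge edge or by a banana with several parallel edges between distinguished vertices; the choice should be made so that any rank-$k$ divisor on the resulting graph is forced to distribute its chips nontrivially between the two sides.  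In all such gluings, the genus of $G$ is $g(G_x) + g(G_y)$ plus a constant determined by the gadget.

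For the upper bound $\gon_k(G) \le x_k + y_k$, take rank-$k$ divisors $D_x, D_y$ on $G_x, G_y$ and form $D = D_x + D_y$ on $G$.  For any effective degree-$k$ divisor $E$ on $G$, split $E = E_x + E_y$ along the sides; the rank conditions give effective divisors equivalent to $D_i - E_i$ on $G_i$, and these equivalences lift to $G$ via the standard trick of rewriting any firing of the cut vertex as the anti-firing of the remaining vertices on that side.  The matching lower bound $\gon_k(G) \ge x_k + y_k$ is the principal obstacle: given a rank-$k$ divisor $D$ on $G$, I would reduce $D$ at the gluing locus and argue that its restrictions to $G_x$ and $G_y$ each retain rank $\ge k$ on their respective subgraph, so that $\Deg(D) \ge x_k + y_k$.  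The decomposition-of-rank lemma enabling this must be strong enough to rule out the dumbbell-type discount, which is precisely where the rigidity of the chosen gluing gadget enters.

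For the moreover claim, given a reducible $\vec{x} = \vec{a} + \vec{b}$ and the base realization of genus $g_a + g_b$, my plan is to introduce a tunable parameter into the gluing gadget.  Concretely, I would replace a single bridge edge by a banana of $k$ parallel edges, so the graph's genus becomes $g_a + g_b + (k-1)$; letting $k$ range over positive integers produces realizations of $\vec{x}$ at every integer genus $\ge g_a + g_b$, which is all sufficiently large $g$.  The main step to verify is that the additivity argument from the closure part goes through uniformly in $k$: the upper bound is unchanged, and the lower-bound decomposition should continue to apply because the banana, with its two cut-type endpoints, still separates chip-firing between the two sides independently of the edge multiplicity.  Granting this uniformity, the moreover follows.
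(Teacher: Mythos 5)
Your architecture matches the paper's: glue $G_x$ to $G_y$ by a banana of parallel edges between distinguished vertices, and vary the number of edges to realize all sufficiently large genera. The upper bound and the ``moreover'' step are fine. But the lower bound has two concrete gaps. First, you never fix the edge multiplicity $\ell$, and you leave ``a single bridge edge'' on the table; that option fails outright (two triangles joined by a bridge form a graph of genus $2$, hence of gonality $2$, not $4$ --- the same discount as your dumbbell). The paper takes $\ell \geq \gon_r(G_1)+\gon_r(G_2)$, and this quantitative threshold is doing real work.

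Second, the ``decomposition-of-rank lemma'' you want --- reduce $D$ at the gluing locus and conclude that \emph{both} restrictions retain rank $\geq k$ --- is false. Only a one-sided statement holds: if $D$ is $v_1$-reduced then $\rk(D\vert_{G_1}) \geq \rk(D)$ (the paper's Lemma~\ref{Lem:GlueReduced}); the restriction to the other side can have rank $0$ (a large pile of chips on $v_1$ is $v_1$-reduced, has large rank on $G$, and restricts to the zero divisor on $G_2$). Reducing at $v_2$ instead controls the $G_2$ side, but for a \emph{different} representative of the class, so the two degree bounds cannot simply be added. The paper's actual lower bound is a degree dichotomy: either $\Deg(D\vert_{G_i}) \geq \gon_k(G_i)$ for both $i$ and you are done, or one restriction has degree below the corresponding gonality, hence rank $<k$ on its side, hence some winning firing sequence must move chips across the bridge, which forces $\Deg(D) \geq \ell \geq \gon_k(G_1)+\gon_k(G_2)$. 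You need this (or an equivalent) argument in place of the two-sided rank claim; without it the proof does not close. Once the proposition is established with the explicit threshold on $\ell$, your genus-tuning step for the ``moreover'' goes through as you describe.
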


The set $\mathcal{G}_r$ is always contained in the cone:
\[
\mathcal{C}_r := \{ \vec{x} \in \mathbb{N}^r \mid x_i < x_{i+1} \text{ and } x_{i+j} \leq x_i + x_j \text{ for all } i,j \leq r \}
\]
(See Lemmas~\ref{Lem:Increasing} and~\ref{Lem:Subadditive}).
Using Theorem~\ref{Thm:Closed}, we give a short proof of \cite[Theorem~1.5]{ADMYY}.

\begin{theorem} \cite[Theorem~1.5]{ADMYY}
\label{Thm:G2}
We have 
\[
\mathcal{G}_2  = \mathcal{C}_2 = \{ (x,y) \in \mathbb{N}^3 \mid x+1 \leq y \leq 2x \} .
\]
Moreover, if $x+2 \leq y \leq 2x$, then for all sufficiently large $g$, there exists a graph $G$ of genus $g$ such that $\gon_1 (G) = x$ and $\gon_2 (G) = y$.
\end{theorem}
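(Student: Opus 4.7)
The plan is to prove both inclusions and the ``moreover'' clause, with Theorem~\ref{Thm:Closed} doing most of the work once I produce enough generators.

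The inclusion $\mathcal{G}_2 \subseteq \mathcal{C}_2$ is immediate from Lemmas~\ref{Lem:Increasing} and~\ref{Lem:Subadditive}: strict monotonicity forces $y \geq x+1$, and subadditivity with $i = j = 1$ forces $y \leq 2x$. The substance therefore lies in the reverse inclusion, which I will establish by exhibiting a generating family for the semigroup $\mathcal{G}_2$ and invoking closure under addition.

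My first step is to show that $(a, a+1) \in \mathcal{G}_2$ for every integer $a \geq 1$. Any tree handles $a = 1$. For $a \geq 2$, my candidate is the complete graph $K_{a+1}$, whose first gonality is well known to equal $a$. For the second gonality, I consider the divisor $D = \sum_v v$ of degree $a+1$ and check that $\rk(D) \geq 2$: given any effective $E$ of degree $2$, either the support of $E$ contains two distinct vertices (in which case $D - E$ is already effective) or $E = 2v$, in which case firing the set of all vertices other than $v$ transforms $D - E$ into $(a-1)v$, again effective. Hence $\gon_2(K_{a+1}) \leq a+1$, and equality follows from $\gon_2 \geq \gon_1 + 1$.

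Given an arbitrary $(x, y) \in \mathcal{C}_2$, I set $k = y - x$ (so $1 \leq k \leq x$) and use the decomposition
\[
(x, y) = (x - k + 1, x - k + 2) + (k - 1)(1, 2).
\]
Both summands lie in $\mathcal{G}_2$ by the previous step (the tree realizes $(1,2)$), so Theorem~\ref{Thm:Closed} places $(x, y)$ in $\mathcal{G}_2$. For the ``moreover'' clause, if $y \geq x + 2$ then $k \geq 2$, and the decomposition above exhibits $(x, y)$ as a sum of two elements of $\mathcal{G}_2$, i.e., $(x, y)$ is reducible in the sense of the paper. The second conclusion of Theorem~\ref{Thm:Closed} then supplies a graph of every sufficiently large genus realizing $(x,y)$. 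The only non-formal step in the plan is the chip-firing verification on $K_{a+1}$, but that reduces to a single round of set-firing and should be the easiest of the details to write out.
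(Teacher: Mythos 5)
Your proposal is correct and follows essentially the same route as the paper: both inclusions via Lemmas~\ref{Lem:Increasing} and~\ref{Lem:Subadditive}, the generators $(a,a+1)$ realized by $K_{a+1}$ and $(1,2)$ by a tree, and Theorem~\ref{Thm:Closed} to assemble the general $(x,y)$ and to get the genus statement from reducibility (the paper phrases the decomposition as an induction on $y-x$ rather than a closed-form sum, which is only cosmetic). The one genuine difference is that you verify $\gon_2(K_{a+1})=a+1$ directly by a chip-firing argument on $\sum_v v$, whereas the paper simply cites the Cools--Panizzut computation (Lemma~\ref{Lem:Complete}); your verification is correct and self-contained.
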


As noted in Section 4 of \cite{ADMYY}, Theorem~\ref{Thm:G2} demonstrates that there are graphs whose gonality sequence cannot be the gonality sequence of an algebraic curve.  For example, if $C$ is a curve whose 2nd gonality $\gon_2 (C) = p$ is prime, then $C$ maps generically 1-to-1 onto a plane curve of degree $p$.  It follows that the genus of $C$ is at most ${{p-1}\choose{2}}$.  On the other hand, if $p \geq 5$, then by Theorem~\ref{Thm:G2} there exists a graph $G$ of genus $g$ with $\gon_1 (G) = p-2$ and $\gon_2 (G) = p$ for all $g$ sufficiently large.  Since the genus of a graph is determined by its gonality sequence, we see that the gonality sequence of $G$ does not agree with that of any algebraic curve.

On the other hand, if $\gon_2 (G) = \gon_1 (G) + 1$, then $(\gon_1 (G), \gon_2 (G))$ is an irreducible element of $\mathcal{G}_2$.  We know of two infinite families of graphs such that the 2nd gonality is 1 greater than the 1st gonality -- the complete graph $K_{x+1}$ and the generalized banana graph $B^*_{x,x}$ from \cite{ADMYY}.  Interestingly, both graphs have genus ${{x}\choose{2}}$ and 3rd gonality $\gon_3 = 2x$.  This is exactly the genus and 3rd gonality of an algebraic curve $C$ satisfying $\gon_2 (C) = \gon_1 (C) + 1 = x+1$ (see Lemma~\ref{Lem:PlaneCurves}).  We ask whether this holds more generally.

\begin{question}
\label{Q:PlaneGraphs}
Let $G$ be a graph with the property that $\gon_2 (G) = \gon_1 (G) + 1$.  
\begin{enumerate}
\item Is the genus of $G$ necessarily $g = {{\gon_1 (G)}\choose{2}}$? 

\item For $r<g$, do we have
\[
\gon_r (G) = k \cdot \gon_2 (G) - h,
\]
where $k$ and $h$ are the uniquely determined integers with $1 \leq k \leq \gon_2(G) - 3$, $0 \leq h \leq k$, such that $r = \frac{k(k+3)}{2} - h$?

\item In particular, if $\gon_1 (G) \geq 2$, does it follow that $\gon_3 (G) = 2 \cdot \gon_1 (G)$?
\end{enumerate}
\end{question}

Much of this paper is dedicated to studying $\mathcal{G}_3$.  Unlike $\mathcal{G}_2$, we are unable to provide a complete description of $\mathcal{G}_3$.  However, we have the following partial result.

\begin{theorem}
\label{Thm:G3zBig}
Let $(x,y,z) \in \mathcal{C}_3$ with $z \geq 2x$.  Suppose further that:
\begin{itemize}
\item  if $y=x+1$, then $z = 2x$, and
\item  if $z=x+y$, then $y = 2x$.
\end{itemize}
Then $(x,y,z) \in \mathcal{G}_3$.
\end{theorem}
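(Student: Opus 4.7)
The plan is to apply Theorem~\ref{Thm:Closed} and exhibit each triple $(x,y,z)$ satisfying the hypotheses as a sum of known elements of $\mathcal{G}_3$. I draw on four families of base elements: (1) the tree, realizing $(1, 2, 3)$; (2) $K_{x+1}$ (equivalently $\banana{x}{x}$), realizing $(x, x+1, 2x)$ for $x \geq 2$; (3) a generalized banana $\banana{x}{y}$ realizing $(x, y, 2x)$ for $x \geq 2$ and $x + 1 \leq y \leq 2x - 1$; and (4) an explicit family of graphs realizing $(x, 2x, 2x + 1)$ for every $x \geq 2$. Establishing that family (3) has the claimed third gonality, and constructing family (4), are the two inputs needed beyond what is already recorded.

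Writing $(x,y,z) = (x,\, x + p,\, 2x + q)$, membership in $\mathcal{C}_3$ together with $z \geq 2x$ becomes $1 \leq p \leq x$ and $0 \leq q \leq p$, and the two exceptional hypotheses exclude exactly the pairs with $p = 1,\ q \geq 1$ or with $q = p$ and $p < x$. When $q = 0$ the triple lies directly in family (2) or (3); when $p = q = x$ we have $(x, 2x, 3x) = x \cdot (1, 2, 3)$; and when $1 \leq q < p < x$ I decompose
\[
(x,\, x+p,\, 2x+q) \, = \, q \cdot (1, 2, 3) \, + \, \bigl(x - q,\, (x-q) + (p-q),\, 2(x - q)\bigr),
\]
whose second summand sits in family (3) since $x - q \geq 2$ and $1 \leq p - q \leq (x - q) - 1$. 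The remaining case is $p = x$ with $1 \leq q \leq x - 1$, namely triples $(x, 2x, 2x + q)$; for $q = 1$ I invoke family (4), and for $q \geq 2$ I iterate
\[
(x,\, 2x,\, 2x + q) \, = \, (1, 2, 3) \, + \, \bigl(x - 1,\, 2(x - 1),\, 2(x - 1) + (q - 1)\bigr),
\]
which reduces $x$ and $q$ each by one and terminates at a triple of the form $(x', 2x', 2x' + 1)$ handled by family (4).

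The main obstacle is the existence of family (4): one cannot build $(x, 2x, 2x + 1)$ from families (1)--(3) by addition, since any nontrivial sum realizing $y = 2x$ forces $z$ to be either $2x$ or $3x$. Producing an explicit family of graphs with $\gon_1 = x$, $\gon_2 = 2x$, and $\gon_3 = 2x + 1$ -- natural candidates being the triple banana $\bananaStar{x}{x}{1}$, or $\banana{x}{2x-1}$ modified by gluing on a short cycle that nudges $\gon_2$ up to $2x$ while adding exactly one to $\gon_3$ -- together with the simultaneous rank calculations that certify all three gonalities (upper bounds via explicit divisors, lower bounds via $v$-reduced divisors and Dhar's burning algorithm) is where the main technical work will lie.
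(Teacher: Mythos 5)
Your additive bookkeeping is sound: parametrizing the triples as $(x,x+p,2x+q)$ and peeling off copies of $(1,2,3)$ correctly reduces everything to the two families $(x,y,2x)$ and $(x,2x,2x+1)$, and this is the same general strategy as the paper (reduce to small building blocks via Theorem~\ref{Thm:Closed}). But both of the inputs you defer are problematic. First, your family (3) is false as stated: no single generalized banana graph $\banana{x}{y}$ realizes $(x,y,2x)$. By Lemma~\ref{Lem:BStar} and Theorem~\ref{Thm:Banana3gon} of this paper, the first three gonalities of $\banana{a}{b}$ (for $a\leq b\leq 2a-1$) are $(a,\,b+1,\,a+b)$, so the third gonality always equals $\gon_1+\gon_2-1$, which is $2x$ only in the degenerate case $b=a$ (essentially $K_{a+1}$). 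The triples $(x,y,2x)$ with $x+2\leq y\leq 2x-1$ \emph{are} in $\mathcal{G}_3$, but you need a different witness; the paper writes
\[
(x,y,2x)=(2x-y,\,2x-y+1,\,4x-2y)+(y-x,\,2y-2x-1,\,2y-2x)
\]
using the complete graph $K_{2x-y+1}$ and the complete bipartite graph $K_{y-x,y-x}$ (with $K_{x,x}$ handling $y=2x-1$ directly).

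Second, your family (4) is asserted but not constructed, and the candidates you float do not work: $\bananaStar{x}{x}{1}$ would require $a+n\leq b$, i.e.\ $x+1\leq x$. This gap closes immediately once you bring in Lemma~\ref{Lem:CompleteBipartite}: the complete bipartite graph $K_{x,x+1}$ has gonalities $(x,\,2x,\,2x+1)$, and more generally $K_{x,z-x}$ realizes $(x,2x,z)$ for all $2x<z<3x$, which disposes of your entire $p=x$ case in one stroke with no iteration. The structural omission in your proposal is that you never use complete bipartite graphs, and these supply exactly the two families your decomposition cannot reach with trees, complete graphs, and bananas alone. With Lemma~\ref{Lem:CompleteBipartite} substituted in for families (3) and (4), your decomposition scheme would go through and would be essentially equivalent to the paper's proof.
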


We suspect that Theorem~\ref{Thm:G3zBig} classifies triples $(x,y,z) \in \mathcal{G}_3$ with $z \geq 2x$.  Indeed, by Lemmas~\ref{Lem:Increasing} and~\ref{Lem:Subadditive}, we have $\mathcal{G}_3 \subseteq \mathcal{C}_3$.  If $y=x+1$, then an affirmative answer to Question~\ref{Q:PlaneGraphs} would show that $z=2x$.  Similarly, if $z=x+y$, then an affirmative answer to \cite[Question~4.5]{ADMYY} would show that $y=2x$.  The goal of the rest of this paper is to study triples $(x,y,z) \in \mathcal{G}_3$ with $z < 2x$.  In Section~\ref{Sec:Sym}, we prove the following.

\begin{theorem}
\label{Thm:G3zLess}
Let $(x,y,z) \in \mathcal{C}_3$ with $x+2 \leq y \leq z-2$ and $z \geq \frac{3}{2}x+2$.
Then $(x,y,z) \in \mathcal{G}_3$.
\end{theorem}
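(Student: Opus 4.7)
The plan is to combine the semigroup closure of $\mathcal{G}_3$ from Theorem~\ref{Thm:Closed} with Theorem~\ref{Thm:G3zBig}, supplemented by a new family of ``symmetric'' atomic graphs — after which Section~\ref{Sec:Sym} is presumably named — whose role is to supply elements of $\mathcal{G}_3$ with $\gon_3 < 2\gon_1$.

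I would handle the region $z \geq 2x$ first. Within our hypothesis, the exception $y = x+1$ of Theorem~\ref{Thm:G3zBig} is already ruled out by $y \geq x + 2$, and the remaining exception $z = x + y$ with $y \neq 2x$ can be handled by peeling off a small tree-type atom such as the edge's triple $(1,2,3) \in \mathcal{G}_3$ and reapplying Theorem~\ref{Thm:G3zBig} to the remainder, modulo a few small-parameter boundary cases treated by hand.

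The substantive case is $\tfrac{3}{2}x + 2 \leq z < 2x$. Because every triple supplied by Theorem~\ref{Thm:G3zBig} satisfies $z_0 \geq 2 x_0$, no sum of such atoms ever yields $z < 2x$; therefore the proof must introduce atomic graphs with $\gon_3 < 2\gon_1$. I would construct a family $H$ of graphs (via a symmetric gluing or product-type construction, likely involving something along the lines of the rook graphs $\rook{m}{n}$ or a related symmetric configuration) whose gonality sequence realizes ratios $\gon_3(H)/\gon_1(H)$ as close to $\tfrac{3}{2}$ as needed. The gonality sequence of each $H$ would be certified from above by exhibiting explicit divisors of each rank, and from below by a Dhar-burning / reduced-divisor argument (or specialization from a suitable algebraic curve). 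Given a target $(x,y,z)$ in the hypothesized region, one then chooses such a symmetric atom $(a, y_1, z_1)$ with $z_1 < 2a$ together with a triple $(b, y - y_1, z - z_1)$ lying in the non-exceptional part of Theorem~\ref{Thm:G3zBig}, and applies Theorem~\ref{Thm:Closed} to conclude.

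The arithmetic of this decomposition is where the constant $\tfrac{3}{2}$ enters: the constraints $z_1 \geq \tfrac{3}{2}a + O(1)$ on the symmetric atom and $z - z_1 \geq 2(x-a)$ on the Theorem~\ref{Thm:G3zBig} atom combine to force $z \geq \tfrac{3}{2}x + 2$, which is precisely our hypothesis. I expect the main obstacle to be the atomic construction — producing the symmetric graphs and rigorously pinning down their first three gonalities both from above and below — while the final decomposition is a careful but essentially routine case analysis on the residues of $x$, $y$, and $z$ modulo small integers, with occasional use of multiple atoms (rather than just two) to cover corner cases where a single symmetric atom of the needed size is unavailable.
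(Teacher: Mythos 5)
Your strategy is the one the paper actually follows: write $(x,y,z)$, via Theorem~\ref{Thm:Closed}, as a sum of a triple from Theorem~\ref{Thm:G3zBig} and new ``atomic'' triples with $\gon_3/\gon_1$ near $\tfrac{3}{2}$, and you correctly guess that rook graphs and a symmetric gluing construction supply those atoms (the paper uses $K_3 \square K_m$, with sequence $(2m,3m-1,3m)$, together with the symmetric generalized banana graphs $B^{0,0}_{a,b,k}$ and their three variants). But as written the proposal has a genuine gap: the atoms are never produced. Essentially all of the mathematical content of the theorem lives in that step --- the paper needs Sections~\ref{Sec:Bananas} and~\ref{Sec:Sym} in full (first $\gon_3(B^*_{a,b}) = a+b$, then $\gon_3(B^{0,0}_{a,b,k}) = k+b+1$ and its variants, proved by induction with Lemma~\ref{Lem:GlueReduced} and reduced-divisor arguments) before Theorem~\ref{Thm:SymBanana} and Corollary~\ref{Cor:SymBPlusRook} can feed the decomposition. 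Saying the lower bounds will come from ``a Dhar-burning argument'' is a plan, not a proof; you would have to exhibit the graphs and carry those arguments out.

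Two further points, one of which is a step that would fail. Your fix for the exceptional case $z=x+y$, $y \neq 2x$ of Theorem~\ref{Thm:G3zBig} --- subtracting the tree triple $(1,2,3)$ --- cannot work: since $3=1+2$, the remainder $(x-1,y-2,z-3)$ still satisfies $z'=x'+y'$, and it meets the hypothesis $y'=2x'$ of Theorem~\ref{Thm:G3zBig} only if $y=2x$ to begin with. Indeed, if $z=x+y$ then every summand in any decomposition within $\mathcal{C}_3$ must itself satisfy $z_i=x_i+y_i$, so no choice of summands escapes this boundary; the paper's own discussion of \cite[Question~4.5]{ADMYY} indicates that such triples are expected to be genuine obstructions rather than removable corner cases. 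Second, the closing case analysis is not ``essentially routine'': the paper must insert sporadic triples such as $(7,11,13)=(3,5,6)+(4,6,7)$, treat $z \in \{2x-1,2x-2\}$ separately, and patch the value $c+d=2b+3$ that the atom $(2b,d,3b+1)$ cannot reach when $a=2$. Those corner cases are exactly where the hypothesis $z \geq \tfrac{3}{2}x+2$ gets consumed, so they cannot be deferred as bookkeeping.
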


Theorems~\ref{Thm:G3zBig} and~\ref{Thm:G3zLess} gives a possibly complete description of triples $(x,y,z) \in \mathcal{G}_3$ with $z \geq \frac{3}{2}x+2$.  However, there exist triples $(x,y,z) \in \mathcal{G}_3$ such that $z < \frac{3}{2}x + 2$.  Indeed,  we have the following.

\begin{lemma}
\label{Lem:CloseTo1}
Let $q$ be a rational number in the range $1 < q \leq 3$.  Then there exists $(x,y,z) \in \mathcal{G}_3$ such that $\frac{z}{x} = q$.
\end{lemma}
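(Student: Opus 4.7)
The plan is to apply Theorem~\ref{Thm:Closed} and reduce the lemma to exhibiting a few base triples. The key observation is an \emph{interpolation lemma}: if $(x_1, y_1, z_1), (x_2, y_2, z_2) \in \mathcal{G}_3$ have ratios $r_1 = z_1/x_1 < r_2 = z_2/x_2$, then for any rational $q \in (r_1, r_2)$ the equation $q = (az_1 + bz_2)/(ax_1 + bx_2)$ has the solution $a/b = (z_2 - qx_2)/(qx_1 - z_1)$, a positive rational (since both numerator and denominator are positive by the assumption $r_1 < q < r_2$). Choosing positive integers $a, b$ realizing this ratio, the sum $a(x_1, y_1, z_1) + b(x_2, y_2, z_2)$ lies in $\mathcal{G}_3$ by Theorem~\ref{Thm:Closed} and has ratio exactly $q$. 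Hence the set $R$ of realized ratios is a rationally convex subset of $\mathbb{Q} \cap (1, 3]$, and to prove the lemma it suffices to exhibit elements of $\mathcal{G}_3$ with ratio $3$ and with ratios approaching $1$.

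The upper endpoint is immediate: any tree $T$ has $\gon_r(T) = r$, so $(1, 2, 3) \in \mathcal{G}_3$ realizes the ratio $3$. For intermediate examples, Theorem~\ref{Thm:G3zLess} supplies $(2n, 2n+2, 3n+2) \in \mathcal{G}_3$ for $n \geq 2$, which has ratio $3/2 + 1/n$; interpolating these against the tree via the argument above realizes every rational in $(3/2, 3]$.

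The hard part is ratios approaching $1$. Here I would construct an explicit family $\{G_n\}$ of graphs with $\gon_1(G_n) \to \infty$ while $\gon_3(G_n) - \gon_1(G_n)$ grows sublinearly, so that $\gon_3(G_n)/\gon_1(G_n) \to 1$; interpolating against the tree then fills in $(1, 3/2]$. The natural recipe is to make $\gon_1(G_n)$ close to the genus $g(G_n)$, since the Riemann--Roch bound $\gon_3 \leq g + 3$ then forces $\gon_3 - \gon_1$ to be small. The main obstacle is the explicit construction: the families used elsewhere in this paper (complete graphs, $\banana{n}{n}$, Brill--Noether general chains of loops) all have $\gon_3/\gon_1 \geq 3/2$, so a genuinely different family is required, together with a reduced-divisor computation identifying the first three gonalities exactly.
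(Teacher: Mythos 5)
Your overall strategy---exhibit a few base triples and interpolate inside the semigroup using Theorem~\ref{Thm:Closed}---is the right one, and your interpolation step is correct: for $r_1 < q < r_2$ the quantity $(z_2 - qx_2)/(qx_1 - z_1)$ is a positive rational, so clearing denominators gives positive integers $a,b$ with $a(x_1,y_1,z_1)+b(x_2,y_2,z_2)\in\mathcal{G}_3$ of ratio exactly $q$. This is essentially how the paper argues as well. But the proposal has a genuine gap: the entire range $(1,\tfrac{3}{2}]$ rests on a family of graphs with $\gon_3/\gon_1 \to 1$ that you never construct, and you assert, incorrectly, that no family in the paper has this property. The rook graphs of Lemma~\ref{Lem:Rook}, already listed in Section~\ref{Sec:DP}, do exactly this job: $\gon_1(K_n\square K_m)=(n-1)m$ and $\gon_3(K_n\square K_m)=nm$, so the ratio is $\tfrac{n}{n-1}$, which tends to $1$ as $n\to\infty$ (and is below $\tfrac{3}{2}$ already for $n\geq 3$). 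Note also that your guiding heuristic---force $\gon_1$ close to the genus so that Riemann--Roch pins down $\gon_3$---is not what makes these examples work; the genus of $K_n\square K_m$ is far larger than its gonality.

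For comparison, the paper's proof handles $q\geq 2$ by Theorem~\ref{Thm:G3zBig}, realizes $q=\tfrac{n+1}{n}$ directly by $K_{n+1}\square K_m$, and for $\tfrac{n+1}{n}<q<\tfrac{n}{n-1}$ interpolates between the two consecutive rook families $K_n\square K_m$ and $K_{n+1}\square K_{m'}$ with the explicit choices $m=nz-(n+1)x$ and $m'=nx-(n-1)z$; this is precisely an instance of your interpolation lemma. So the missing idea is not a new construction at all, but recognizing that the rook graphs supply ratios arbitrarily close to $1$. Until that (or some equivalent family) is supplied with a proof of its first three gonalities, the statement is only established for $q$ in roughly $(\tfrac{3}{2},3]$.
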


Unfortunately, it is difficult to write down a simple, closed-form expression for the semigroup generated by these triples.  It seems likely that the techniques of this paper could be used to study $\mathcal{G}_r$ for $r \geq 4$, or to produce analogues of Theorem~\ref{Thm:G3zLess} where the ratio $\frac{z}{x}$ is bounded below by a constant that is smaller than $\frac{3}{2}$.

The paper is organized as follows.  In Section~\ref{Sec:Prelims}, we present background on the divisor theory of graphs.  In Section~\ref{Sec:DP} we introduce graphs with known 1st, 2nd, and 3rd gonalities.  In Section~\ref{Sec:FirstProofs}, we prove all of the main results except for Theorem~\ref{Thm:G3zBig}, which is proved in Sections~\ref{Sec:Bananas} and~\ref{Sec:Sym}.  Finally, in Section~\ref{Sec:Curves}, we study the gonality sequences of certain algebraic curves, and ask several questions about graphs with the same gonality sequences.

\bigskip

\noindent{\textbf{Acknowledgments.}}  This research was conducted as a project with the University of Kentucky Math Lab, supported by NSF DMS-2054135.

\section{Preliminaries}
\label{Sec:Prelims}

In this section we will introduce the notion of gonality on graphs, along with important terms and concepts.  Throughout, we allow graphs to have parallel edges, but no loops.

A \emph{divisor} on a graph $G$ is a formal $\mathbb{Z}$-linear combination of the vertices in $G.$ A divisor $D$ can be expressed as 
\[ 
D = \sum_{v \in V(G)} D(v) \cdot v, 
\]
where each $D(v)$ is an integer.
The \emph{degree} of a divisor $D$, denoted $\deg(D)$, is the sum of the coefficients of $D$.
The \emph{support} of a divisor, denoted $\Supp(D)$, is defined as 
\[\Supp(D) = \{v \in V(G) | D(v) > 0\}\]

It is standard to think about divisors on graphs in terms of chip configurations. In a chip configuration, the coefficient of a vertex $v$ is reinterpreted as the number of chips sitting on $v.$ So, in a divisor $D,$ $v$ has $D(v)$ chips sitting on it. A vertex with a negative number of chips is said to be ``in debt.''  A divisor is \emph{effective} if, for every $w \in V(G)$, we have $D(w) \geq 0$. In other words, a divisor is effective if there are no vertices in debt.  A divisor is \emph{effective away from $v$} if, for every $w \in V(G) \setminus \{v\}$, we have $D(w) \geq 0$.

From this interpretation we can define a chip-firing move.  Firing a vertex $v$ causes $v$ to redistribute some of its chips by passing one chip across each of the edges incident to it.  We say that two divisors $D$ and $D^\prime$ are \emph{equivalent} if $D^\prime$ can be obtained from $D$ via a sequence of chip firing moves. The \emph{rank} of a divisor $D,$ denoted $\rk(D)$, is the largest integer $r$ such that $D - E$ is equivalent to an effective divisor for every effective divisor $E$ of degree $r$.  The $r$th \emph{gonality} of a graph is the minimum degree over all divisors of rank $r$.

Gonality is often framed as a chip firing game. Given a starting divisor we allow the ``opponent'' of the game to remove $r$ chips from anywhere on the graph.  A divisor has rank $r$ if, for every choice of chips by the opponent, there is a sequence of chip firing moves that eliminates all debt on the graph.

We recall some basic facts about the $r$th gonality from \cite{ADMYY}.

\begin{lemma}\cite[Lemma~3.1]{ADMYY}
\label{Lem:Increasing}
Let $G$ be a graph.  For all $r$, we have $\gon_r (G) < \gon_{r+1} (G)$.
\end{lemma}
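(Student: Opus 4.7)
The plan is to show that if $D$ is a divisor of rank at least $r+1$, then removing a single chip from any vertex yields a divisor of rank at least $r$; the conclusion then follows by taking $D$ to achieve the minimum defining $\gon_{r+1}(G)$.

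Concretely, I would proceed as follows. Let $D$ be a divisor on $G$ with $\deg(D) = \gon_{r+1}(G)$ and $\rk(D) \geq r+1$. Fix any vertex $v \in V(G)$ and set $D' := D - v$, so $\deg(D') = \deg(D) - 1$. To show $\rk(D') \geq r$, take an arbitrary effective divisor $E$ of degree $r$. Then $E + v$ is effective of degree $r+1$, so by the rank hypothesis on $D$ the divisor
\[
D - (E + v) = D' - E
\]
is equivalent to an effective divisor. Since $E$ was arbitrary, $\rk(D') \geq r$ by definition of rank. Consequently
\[
\gon_r(G) \leq \deg(D') = \gon_{r+1}(G) - 1 < \gon_{r+1}(G).
\]

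There is essentially no obstacle here: the argument is a direct unwinding of the definition of rank, exploiting the fact that one is allowed to choose $E$ to include any prescribed effective divisor (in this case, the single vertex $v$). The only point requiring (minor) care is that the definition of rank quantifies over \emph{all} effective $E$ of degree $r$, so one must verify the reduction for every such $E$, which is exactly what the argument above does.
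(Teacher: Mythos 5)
Your argument is correct: removing a chip from an arbitrary vertex $v$ and absorbing $v$ into the effective divisor $E$ is exactly the standard way to see that a rank-$(r+1)$ divisor of degree $\gon_{r+1}(G)$ yields a rank-$r$ divisor of degree $\gon_{r+1}(G)-1$. The paper does not reprove this lemma (it cites \cite[Lemma~3.1]{ADMYY}), and your proof matches the standard argument given there, so there is nothing to add.
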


\begin{lemma}\cite[Lemma~3.2]{ADMYY}
\label{Lem:Subadditive}
Let $G$ be a graph.  For all $r$ and $s$, we have $\gon_{r+s} (G) \leq \gon_r (G) + \gon_s (G)$.
\end{lemma}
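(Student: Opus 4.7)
The plan is to produce a divisor of degree $\gon_r(G) + \gon_s(G)$ and rank at least $r+s$, which immediately gives the desired bound on $\gon_{r+s}(G)$. The natural candidate is the sum of two divisors that individually realize the $r$th and $s$th gonality.

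Concretely, I would let $D_r$ be a divisor of rank at least $r$ with $\deg(D_r) = \gon_r(G)$, and let $D_s$ be a divisor of rank at least $s$ with $\deg(D_s) = \gon_s(G)$. Set $D := D_r + D_s$; its degree is $\gon_r(G) + \gon_s(G)$. It then suffices to show $\rk(D) \geq r+s$, because by definition $\gon_{r+s}(G)$ is bounded above by the degree of any such divisor.

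To verify the rank bound, I would use the ``chip-firing game'' interpretation of rank described just before Lemma~\ref{Lem:Increasing}. Given an arbitrary effective divisor $E$ of degree $r+s$, I can partition its chips into two effective divisors $E = E_r + E_s$ with $\deg(E_r) = r$ and $\deg(E_s) = s$ (just split the chips at each vertex in any way that respects the degrees). Since $\rk(D_r) \geq r$, the divisor $D_r - E_r$ is equivalent to some effective divisor $D_r'$; likewise $D_s - E_s$ is equivalent to an effective divisor $D_s'$. Adding these equivalences (chip-firing moves on the two summands can be performed independently) yields
\[
D - E = (D_r - E_r) + (D_s - E_s) \sim D_r' + D_s',
\]
and the right-hand side is effective. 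Hence $D - E$ is equivalent to an effective divisor for every effective $E$ of degree $r+s$, so $\rk(D) \geq r+s$.

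There is no real obstacle here; the only subtlety is conceptual rather than technical, namely recognizing that ranks add under sums of divisors because effective divisors of degree $r+s$ decompose as sums of effective divisors of degrees $r$ and $s$, and chip-firing equivalences on the two summands compose into a chip-firing equivalence for the total.
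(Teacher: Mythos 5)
Your proof is correct; the paper itself offers no argument for this statement, simply citing \cite[Lemma~3.2]{ADMYY}, and your argument is the standard one used there. The two key points you identify --- that any effective divisor of degree $r+s$ splits as a sum of effective divisors of degrees $r$ and $s$, and that linear equivalence is compatible with addition of divisors (so rank is superadditive under sums) --- are exactly what is needed, and both hold without issue.
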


For a graph $G$ and a vertex $v \in V(G)$ we say that a divisor $D$ is $v$-\emph{reduced} if the following conditions are satisfied:
\begin{enumerate}
    \item $D$ is effective away from $v$, and
    \item for any subset $A \subseteq V(G) \smallsetminus \{v\}$, the divisor $D^\prime$ obtained by firing the all vertices in $A$ is not effective.
\end{enumerate}

Given a divisor $D$ and a vertex $v$, there exists a unique divisor equivalent to $D$ that is $v$-reduced.  Dhar's Burning Algorithm is a procedure that produces this unique representative.

Given a divisor $D$ and a vertex $v$, we produce the unique $v$-reduced divisor equivalent to $D$ by performing Dhar's burning algorithm as follows:

\begin{enumerate}
    \item Replace $D$ with a divisor that is effective away from $v$.
    \item Start a fire by burning vertex $v$.
    \item Burn every edge that is incident to a burnt vertex.
    \item Let $U$ be the set of unburnt vertices.  For each $w \in U$ we burn $w$ if the number of burnt edges incident to $w$ is strictly greater than $D(w)$.  If no new vertices in $U$ were burnt proceed to step (5).  Otherwise return to step (3).
    \item Let $U$ be the set of unburnt vertices.  If $U$ is empty, then $D$ is $v$-reduced and the algorithm terminates.  Otherwise, replace $D$ with the equivalent divisor $D^\prime $ obtained by firing all vertices in $U$ and return to step (2).
\end{enumerate}

Note that a divisor is $v$-reduced if and only if starting a fire at $v$ results in the entire graph being burnt. This makes Dhar's burning algorithm useful for determining if a divisor has positive rank. For any$v$-reduced divisor $D$, if $D(v) < 0,$ then $D$ does not have positive rank.

\section{Dramatis Personae}
\label{Sec:DP}

This section surveys graphs for which the first few terms of the gonality sequence are known.
The first of these graphs is the complete graph $K_n$, which has genus $g = {{n-1}\choose{2}}$.

\begin{lemma}\cite[Theorem~1]{CoolsPanizzut}
\label{Lem:Complete}
For $r<g$, the $r$th gonality of the complete graph $K_n$ is $\gon_r (K_n) = kn - h$, where $k$ and $h$ are the uniquely determined integers with $1 \leq k \leq n-3$, $0 \leq h \leq k$, such that $r = \frac{k(k+3)}{2} - h$.  In particular, if $n \geq 3$, then
\begin{align*}
\gon_1 (K_n) &= n-1 \\
\gon_2 (K_n) &= n \\
\gon_3 (K_n) &= 2n-2.
\end{align*}
\end{lemma}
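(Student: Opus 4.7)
The statement in Lemma~\ref{Lem:Complete} is cited from Cools and Panizzut, so my plan would be to either reproduce or independently reconstruct their argument. The formula strongly suggests a geometric source: a smooth plane curve of degree $n$ has genus $\binom{n-1}{2}$, and the complete linear series $|kH|$ cut out by plane curves of degree $k$ has degree $kn$ and projective dimension $\frac{k(k+3)}{2}$ for $k \leq n-3$. Decreasing $r$ by one while decreasing the degree by one corresponds to forcing a basepoint. Since $K_n$ is the skeleton of a suitable smooth plane curve of degree $n$, one expects the graph gonalities to mirror this Brill--Noether table exactly, and the task is to establish both directions on the graph.

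For the upper bounds, the natural candidate divisors are
\[
D_{k,h} := k \sum_{v \in V(K_n)} v \;-\; h \cdot v_0
\]
for a chosen basepoint $v_0$ and $0 \leq h \leq k$, which have degree $kn - h$. It suffices to prove $\rk(D_{k,0}) \geq \tfrac{k(k+3)}{2}$, since removing a single chip can drop the rank by at most one; this recovers the entire block of $r$-values associated with fixed $k$. To prove the rank bound for $D_{k,0}$, I would fix an arbitrary effective divisor $E$ of the appropriate degree, compute the $v$-reduced representative of $D_{k,0} - E$ using Dhar's burning algorithm, and exploit the vertex-transitivity of $K_n$ together with the uniform structure of $D_{k,0}$ to verify effectiveness.

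For the lower bounds, the three cases $r \in \{1,2,3\}$ are the ones actually needed in this paper and admit direct arguments. The bound $\gon_1(K_n) \geq n-1$ follows from Dhar's burning algorithm: any $v$-reduced divisor of positive rank on $K_n$ must have at least $n-1$ chips at $v$, because a divisor with fewer chips at $v$ will fail to burn the graph starting from $v$. Then $\gon_2(K_n) \geq n$ follows from Lemma~\ref{Lem:Increasing}, and $\gon_3(K_n) \geq 2n-2$ can be extracted from a short case analysis on $v$-reduced divisors of degree $2n-3$. The main obstacle, and the deep content of the Cools--Panizzut theorem, is the lower bound in the general case $r \geq 4$: ruling out divisors of degree $kn - h - 1$ with rank at least $\frac{k(k+3)}{2} - h$. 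Their proof specializes from the smooth plane curve whose skeleton is $K_n$ and applies classical Brill--Noether theory; an independent graph-theoretic proof would likely require a Clifford-type inequality tailored to complete graphs, which I would expect to be the hardest ingredient to reproduce from scratch.
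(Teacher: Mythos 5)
The paper offers no proof of this lemma; it is quoted verbatim from Cools--Panizzut, so there is no internal argument to compare against, and you are right that the substance lives in that reference. Your outline of the upper bound is sound: the divisors $k\sum_{v} v - h\cdot v_0$ have the right degrees, and the observation that subtracting a single chip drops the rank by at most one correctly reduces the whole block of values for a fixed $k$ to the single estimate $\rk\bigl(k\sum_{v} v\bigr) \geq \frac{k(k+3)}{2}$.

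Two points in your sketch are genuinely off, however. First, the intermediate claim that every $v$-reduced divisor of positive rank on $K_n$ must have at least $n-1$ chips on $v$ is false: on $K_3$ the divisor $v+w$ is $v$-reduced (firing any nonempty subset of $V\smallsetminus\{v\}$ creates debt) and has positive rank by Riemann--Roch, yet carries only one chip at $v$. The conclusion $\gon_1(K_n)=n-1$ is of course correct, but it requires a different argument (e.g.\ the treewidth lower bound, or a more careful analysis of reduced divisors). Second, and more seriously, you attribute the lower bound for general $r$ to ``specializing from the smooth plane curve whose skeleton is $K_n$'' together with Brill--Noether theory. Baker's specialization inequality only increases ranks under specialization, so it yields $\gon_r(K_n)\leq\gon_r(C)$ for a suitable plane curve $C$; it cannot produce the lower bound $\gon_r(K_n)\geq kn-h$, which is precisely the direction you identify as the deep content. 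That direction has to be established by a direct combinatorial analysis of reduced divisors on $K_n$ itself, which is what Cools and Panizzut actually do. So the hardest ingredient of your plan is not merely left unproved but is pointed at a tool that is structurally incapable of supplying it.
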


Next, we have the complete bipartite graph $K_{m,n}$, which has genus $g = (m-1)(n-1)$.  Let
\[
I_r = \{ (a,b,h) \in \mathbb{N}^3 \mid a \leq m-1, b \leq n-1, \text{ and } r=(a+1)(b+1)-1-h \} ,
\]
and let
\[
\delta_r (m,n) = \min \{ an+bm-h \mid (a,b,h) \in I_r \} .
\]

\begin{lemma}\cite[Theorem~4]{CDJP}
\label{Lem:CompleteBipartite}
For $r<g$, the $r$th gonality of the complete bipartite graph $K_{m,n}$ is $\gon_r (K_{m,n}) = \delta_r (m,n)$.  In particular, if $2 \leq m \leq n$, then
\begin{align*}
\gon_1 (K_{m,n}) &= m \\
\gon_2 (K_{m,n}) &= \min \{ 2m,m+n-1 \} \\
\gon_3 (K_{m,n}) & = \min \{ 3m,m+n \}.
\end{align*}
\end{lemma}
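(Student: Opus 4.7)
The plan is to establish $\gon_r(K_{m,n}) = \delta_r(m,n)$ by proving both inequalities separately. Let $U \sqcup W$ denote the bipartition of $V(K_{m,n})$, with $|U| = m$ and $|W| = n$.

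For the upper bound $\gon_r(K_{m,n}) \leq \delta_r(m,n)$, I would fix $(a,b,h) \in I_r$ attaining the minimum and construct an explicit divisor of degree $an + bm - h$ with rank at least $r$. The key candidate is
\[
D_{a,b} := a\sum_{w \in W} w + b\sum_{u \in U} u,
\]
which has degree $an + bm$ and is the tropical analogue of the restriction of $\cO_{\PP^1 \times \PP^1}(a,b)$ to a smooth curve of bidegree $(m,n)$, so one expects $\rk(D_{a,b}) \geq (a+1)(b+1) - 1$. I would verify this directly: for any effective divisor $F$ of degree $(a+1)(b+1) - 1$, show $D_{a,b} - F$ is equivalent to an effective divisor. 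The useful chip-firing moves here fire entire subsets of $U$ or of $W$ (firing all of $U$ sends $n$ chips off each $u$ and places $m$ chips onto each $w$), and these supply enough flexibility to concentrate or spread chips across the two sides as needed by $F$. Subtracting any additional $h$ chips reduces the rank by at most $h$, yielding a divisor of degree $an + bm - h$ and rank at least $r$.

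For the lower bound $\gon_r(K_{m,n}) \geq \delta_r(m,n)$, I would work with $v$-reduced representatives. Fix $v \in U$ and let $D$ be a $v$-reduced divisor of rank at least $r$. Applying Dhar's burning algorithm to singleton sets shows $D(u) \leq n-1$ for each $u \in U \setminus \{v\}$ and $D(w) \leq m-1$ for each $w \in W$, so $D$ lies in a finite family of candidate configurations. To each such $D$ I would attach parameters $(a,b,h)$ recording how many of the $w$'s and $u$'s are ``heavily loaded'' (together with an extra deficiency $h$), and then argue that if $(a+1)(b+1) - 1 - h < r$, one can exhibit an effective divisor $E$ of degree $r$ for which $D - E$ is not equivalent to any effective divisor, with the obstruction certified by Dhar's algorithm at an appropriate vertex. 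This forces $(a,b,h) \in I_r$ and therefore $\Deg(D) \geq \delta_r(m,n)$.

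The hard part will be the lower bound. The upper bound rests on an explicit, symmetric construction whose rank can be checked by a direct chip-firing computation, whereas the lower bound demands a full classification of all possible rank-$r$ divisors, simultaneously tracking which vertex is used for $v$-reduction and how chips can migrate between $U$ and $W$; since the two sides behave asymmetrically when $m \neq n$, this produces a delicate case analysis. An alternative route is to specialize a smooth bidegree $(m,n)$ curve on $\PP^1 \times \PP^1$ to a nodal curve whose dual graph is $K_{m,n}$ and to transport the classical gonality results for curves on a quadric surface through Baker's specialization lemma, bypassing the combinatorial bookkeeping at the cost of deeper algebro-geometric input.
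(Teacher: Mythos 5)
This lemma is not proved in the paper at all: it is imported directly from \cite[Theorem~4]{CDJP}, with the displayed values for $r=1,2,3$ obtained simply by minimizing $an+bm-h$ over $I_r$. There is therefore no in-paper proof to compare yours against, and your proposal has to be judged on its own terms. The upper-bound half of your sketch is the standard and correct one: $D_{a,b}$ is exactly the specialization of $\cO_{\PP^1\times\PP^1}(a,b)$ to the dual graph of a degenerate bidegree-$(m,n)$ curve on a quadric, it has degree $an+bm$, the chip-firing moves that fire all of $U$ or all of $W$ are the right tools for verifying $\rk(D_{a,b})\geq (a+1)(b+1)-1$, and subtracting $h$ further chips drops the rank by at most $h$. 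Your observation that a $v$-reduced divisor satisfies $D(u)\leq n-1$ for $u\in U\smallsetminus\{v\}$ and $D(w)\leq m-1$ for $w\in W$ is also correct.

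Two caveats. First, essentially all of the content of the theorem is in the lower bound, and your sketch of it (attach parameters $(a,b,h)$ to a reduced divisor and ``argue that'' a bad choice of $E$ exists) states the goal rather than supplying an argument; nothing in the proposal indicates how the case analysis closes, particularly how one certifies via Dhar's algorithm that $D-E$ is not equivalent to an effective divisor for the chosen $E$. Second, and more importantly, the proposed ``alternative route'' via Baker's specialization lemma cannot bypass this. Specialization only increases rank, so it yields $\gon_r(K_{m,n})\leq \gon_r(C)$ for a curve $C$ degenerating to the graph; it gives no lower bound on $\gon_r(K_{m,n})$, since a divisor class on the graph of degree less than $\delta_r(m,n)$ and rank at least $r$ need not lift to any divisor on $C$. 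The combinatorial work in the lower bound is therefore unavoidable, and it is precisely what \cite{CDJP} carries out.
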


The \emph{banana graph} $B_n$ is the graph consisting of $2$ vertices with $n$ edges connecting them. A \emph{generalized banana graph} is a graph with vertex set $\{v_1, \dots, v_n\}$ such that for each $1 \leq i < n,$ there is at least $1$ edge between $v_i$ and $v_{i+1}$ and no edges elsewhere.

In \cite{ADMYY}, the authors study the gonalty sequences of different families of generalized banana graphs.  The generalized banana graph $B_{n,e}$ is the graph with vertex set $\{v_1, \dots v_n\}$ and where there are $e$ edges between $v_i$ and $v_{i+1}$ for $1 \leq i \leq n-1.$  The generalized banana graph $\banana{a}{b}$ is the graph with vertex set $\{ v_1, \dots, v_a \}$ and with $b-a+i+1$ edges between $v_i$ and $v_{i+1}$ for $1 \leq i \leq a-1.$ The generalized banana graphs $B_{4,3}$ and $\banana{4}{5}$ are depicted in Figure \ref{fig:BananaGraphs1}.

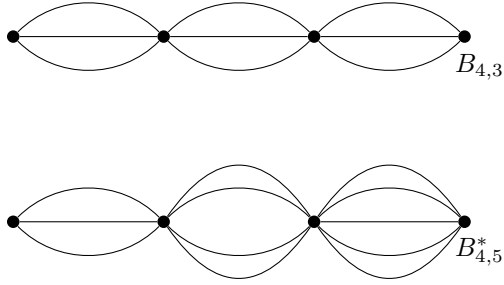
\begin{figure}
    \centering
    \begin{tikzpicture}[scale = 2]
\begin{scope}
\tikzstyle{every node}=[circle,fill,scale=0.5pt]
\node (0) at (0,0) {};
\node (1) at (1,0) {};
\node (2) at (2,0) {};
\node (3) at (3,0) {};

\foreach \i\j in {0/1,1/2,2/3}
{
\draw (\i) -- (\j);
\draw (\i) to [out=45,in=135,looseness=1] (\j);
\draw (\i) to [out=315,in=225,looseness=1] (\j);
}
\end{scope}

\node at (3.1,-0.2) {$B_{4,3}$};

\end{tikzpicture}

\vspace{0.75cm}

\begin{tikzpicture}[scale=2]
\begin{scope}
\tikzstyle{every node}=[circle,fill,scale=0.5pt]
\node (0) at (0,0) {};
\node (1) at (1,0) {};
\node (2) at (2,0) {};
\node (3) at (3,0) {};
\end{scope}

\draw (0) -- (1);
\draw (0) to [out=45,in=135,looseness=1] (1);
\draw (0) to [out=315,in=225,looseness=1] (1);

\draw (1) to [out=45,in=135,looseness=1] (2);
\draw (1) to [out=315,in=225,looseness=1] (2);

\draw (1) to [out=55,in=125,looseness=1.5] (2);
\draw (1) to [out=305,in=235,looseness=1.5] (2);

\draw (2) -- (3);
\draw (2) to
[out=45,in=135,looseness=1] (3);
\draw (2) to [out=315,in=225,looseness=1] (3);
\draw (2) to [out=55,in=125,looseness=1.5] (3);
\draw (2) to [out=305,in=235,looseness=1.5] (3);

\node at (3.1, -0.2) {$\banana{4}{5}$};

\end{tikzpicture}

\caption{The generalized banana graphs $B_{4,3}$ and $\banana{4}{5}.$}
    \label{fig:BananaGraphs1}
\end{figure}

\begin{lemma}\cite[Lemmas~5.2-5.4]{ADMYY}
\label{Lem:Bne}
We have
\begin{align*}
\gon_1 (B_{n,e}) &= \min \{ n,e \} \\
\gon_2 (B_{n,e}) &= \min \{ 2n, 2e, n+e-1 \} .
\end{align*}
\end{lemma}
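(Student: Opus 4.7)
The plan is to prove both formulas by exhibiting explicit divisors achieving the claimed degrees with rank at least $1$ or $2$, and matching these with lower bounds via Dhar's burning algorithm. The main combinatorial input is that $e \cdot v_1 \sim e \cdot v_j$ for every $j$, obtained by successively firing the sets $\{v_1\}, \{v_1, v_2\}, \ldots, \{v_1, \ldots, v_{j-1}\}$, each of which has exactly $e$ boundary edges.

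For $\gon_1$, the upper bound splits into two cases. When $e \leq n$, the divisor $D = e \cdot v_1$ has rank $\geq 1$, since $D - v_j \sim e \cdot v_j - v_j = (e-1) v_j$ is effective. When $n \leq e$, the divisor $D = \sum_{i=1}^n v_i$ is itself effective, and subtracting any single $v_j$ leaves an effective divisor. For $\gon_2$, I would exhibit three rank-$2$ divisors, one for each competing bound: $D = 2 \sum_{i=1}^n v_i$ of degree $2n$, which remains effective after any degree-$2$ subtraction; $D = 2e \cdot v_1$ of degree $2e$, for which the equivalence $2e \cdot v_1 \sim e \cdot v_j + e \cdot v_k$ lets us absorb any subtractor $v_j + v_k$; and $D = e \cdot v_1 + \sum_{i=2}^n v_i$ of degree $n+e-1$, handled by case analysis on the support of the subtractor, migrating the pile at $v_1$ via $e \cdot v_1 \sim e \cdot v_j$ whenever needed.

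For the lower bounds, the key observation is that the cut separating $\{v_i, \ldots, v_n\}$ from its complement has precisely $e$ boundary edges, so in any $v_1$-reduced divisor we have $D(v_i) \leq e-1$ for all $i \geq 2$, and by symmetry the $v_n$-reduced form satisfies $D(v_i) \leq e-1$ for $i \leq n-1$. For $\gon_1$, testing each subtractor $v_j$ shows that either some intermediate vertex must carry a chip (forcing $\deg(D) \geq n$) or enough chips must exist to traverse a cut (forcing $\deg(D) \geq e$). For $\gon_2$, testing $E = 2v_1$ and $E = 2v_n$ pins down $D(v_1) \geq 2$ and $D(v_n) \geq 2$ in the respective reduced forms; combining these boundary bounds with the cut inequalities and with the rank test $E = v_1 + v_n$ yields $\deg(D) \geq \min\{2n, 2e, n+e-1\}$.

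I expect the main obstacle to be the lower bound for $\gon_2$ in the regime where the binding constraint is $n+e-1$, since there one must rule out a hybrid configuration in which chips are simultaneously concentrated at an endpoint and sprinkled along the chain. My plan is to examine both the $v_1$-reduced and $v_n$-reduced representatives of any hypothetical counterexample and to derive a direct contradiction by pairing each internal cut bound with the simultaneous rank-$2$ demands at the two extreme vertices.
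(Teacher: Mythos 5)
First, a point of reference: the paper does not prove this lemma at all; it is imported verbatim from \cite[Lemmas~5.2--5.4]{ADMYY}, so there is no internal proof to compare against, and your proposal must be judged on its own merits. The upper-bound half is correct and complete: the equivalence $e\cdot v_1\sim e\cdot v_j$ holds, and each of the five explicit divisors has the claimed rank (for the degree-$2e$ divisor, note that $2e\cdot v_1=e\cdot v_1+e\cdot v_1\sim e\cdot v_j+e\cdot v_k$ by additivity of linear equivalence, which lets you split the pile). The lower bound for $\gon_1$ also goes through: in a $v_1$-reduced divisor every $v_i$ with $i\geq 2$ carries at most $e-1$ chips, so either $D(v_1)\geq e$ and you are done, or every vertex carries at most $e-1$ chips, in which case $D$ is simultaneously $v_j$-reduced for \emph{every} $j$ and positive rank forces a chip on each vertex, giving degree at least $n$.

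The genuine gap is in the lower bound for $\gon_2$, precisely where you flagged it. The three tests you propose ($E=2v_1$, $E=2v_n$, $E=v_1+v_n$) together with the cut inequalities do not imply $\deg(D)\geq\min\{2n,2e,n+e-1\}$: for $e\geq n+1\geq 4$ the divisor $2v_1+2v_n+\sum_{i=2}^{n-1}v_i$ has degree $n+2<2n$, is simultaneously $v_1$- and $v_n$-reduced with two chips at each end, and survives $E=v_1+v_n$, so no combination of those constraints can reach the bound $2n$. You must also test $E=2v_j$ at interior vertices, and the observation that makes this effective is the same dichotomy as for $\gon_1$: if the $v_1$-reduced representative has $D(v_1)\leq e-1$, then $D$ is $v_j$-reduced for every $j$, hence $D(v_j)\geq 2$ everywhere and $\deg(D)\geq 2n$; otherwise $D(v_1)\geq e$ and either $D(v_1)\geq 2e$ (done) or you may transport the pile of $e$ chips to $v_n$ to exhibit the $v_n$-reduced representative explicitly. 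That last ``hybrid'' case still does not close in one step, and the cleanest way to finish it is an induction on $n$ using the paper's Lemma~\ref{Lem:GlueReduced} with $G_1=B_{n-1,e}$ and $G_2=\{v_n\}$, splitting on whether $D(v_n)\geq 2$, $D(v_n)=1$ (which forces $D(v_{n-1})\geq e$), or $D(v_n)=0$. As written, your proposal correctly identifies the obstacle but does not supply the argument that overcomes it.
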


\begin{lemma}\cite[Lemmas~5.5 and~5.6]{ADMYY}
\label{Lem:BStar}
If $2 \leq a \leq b \leq 2a-1$, we have
\begin{align*}
\gon_1 (\banana{a}{b}) &= a \\
\gon_2 (\banana{a}{b}) &= b+1 .
\end{align*}
\end{lemma}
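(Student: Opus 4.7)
I would establish both equalities by combining explicit candidates for the upper bounds with a careful analysis of $v_1$-reduced divisors via Dhar's burning algorithm for the lower bounds. For $\gon_1 \leq a$, the divisor $D_1 = v_1 + v_2 + \cdots + v_a$ of degree $a$ has one chip at every vertex, so $D_1 - v_j$ is effective for every $j$, giving $\rk(D_1) \geq 1$. For $\gon_2 \leq b+1$, the candidate is $D_2 = (b-a+2) v_1 + v_2 + \cdots + v_a$ of degree $b+1$. The extra $b-a+1$ chips stockpiled on $v_1$ form a reservoir that, through firings of nested sets $\{v_1, \ldots, v_{\ell-1}\}$, can replenish a shortage at any vertex $v_\ell$ down the chain; carrying out this check for each effective $E$ of degree $2$ verifies $\rk(D_2) \geq 2$.

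For the lower bound $\gon_1 \geq a$, I would take any $D$ with $\rk(D) \geq 1$ and pass to its $v_1$-reduced form $D^{(1)}$. Applying Dhar's algorithm from $v_1$ shows $D^{(1)}(v_i) \leq b-a+i-1$ for every $i \geq 2$, and these bounds are slack enough that whenever $D^{(1)}(v_j) \geq 1$, the divisor $D^{(1)} - v_j$ is itself $v_1$-reduced. By uniqueness of $v_1$-reduced forms it is then equivalent to an effective divisor exactly when it is effective, forcing $D^{(1)}(v_j) \geq 1$. The residual case $D^{(1)}(v_j) = 0$ is handled by a short cascade of firings $\{v_1\}, \{v_1, v_2\}, \ldots, \{v_1, \ldots, v_{j-1}\}$; the resulting $v_1$-reduced divisor has $v_1$-coefficient $D^{(1)}(v_1) - (b-a+2)$, forcing $D^{(1)}(v_1) \geq b-a+2$, and since $b \geq a$ this already gives $\deg(D) \geq b \geq a$. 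Either way $\deg(D) \geq a$.

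The lower bound $\gon_2 \geq b+1$ is the delicate step, and the main obstacle. The previous arguments give $D^{(1)}(v_j) \geq 1$ for all $j$ and $D^{(1)}(v_1) \geq 2$ (the latter from $E = 2 v_1$, since $D^{(1)} - 2 v_1$ is itself $v_1$-reduced). The key case is when $D^{(1)}(v_j) = 1$ for some $j \geq 2$: then $D^{(1)} - 2 v_j$ fails to be effective away from $v_1$, and the plan is to compute its $v_1$-reduced form by a cascade of firings of the nested sets $\{v_1, \ldots, v_i\}$ for $i = 1, \ldots, j-1$, each performed exactly once. A direct chip-by-chip calculation shows that $v_1$ loses exactly $b-a+2$, each interior $v_\ell$ with $2 \leq \ell \leq j-1$ loses exactly one, and the resulting divisor is $v_1$-reduced; by uniqueness, $D^{(1)} - 2 v_j$ is equivalent to an effective divisor exactly when $D^{(1)}(v_1) \geq b-a+2$, yielding $\deg(D) \geq (b-a+2) + (a-1) = b+1$. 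In the complementary case where $D^{(1)}(v_j) \geq 2$ for every $j \geq 2$, one has $\deg(D) \geq 2 + 2(a-1) = 2a \geq b+1$ by the hypothesis $b \leq 2a-1$. The hardest part will be the bookkeeping of mixed configurations (where zeros or multiple $1$'s appear among the $D^{(1)}(v_j)$), which requires testing further effective divisors of degree $2$ such as $E = v_1 + v_j$ and verifying that no alternative firing pattern reduces the required degree below $b+1$.
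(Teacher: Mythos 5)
First, a point of comparison: the paper does not prove this lemma itself --- it is imported from \cite[Lemmas~5.5 and~5.6]{ADMYY} --- so there is no internal proof to measure yours against; I can only assess your argument on its own terms.

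Your upper bounds are correct: the all-ones divisor has rank at least $1$ for trivial reasons, and $(b-a+2)v_1 + v_2 + \cdots + v_a$ does have rank at least $2$, since the only nontrivial test is $E = 2v_j$ with $j \geq 2$, and your cascade $\{v_1\}, \{v_1,v_2\}, \ldots, \{v_1,\ldots,v_{j-1}\}$ delivers $b-a+j$ chips to $v_j$ at a cost of $b-a+2$ to $v_1$ and $1$ to each intermediate vertex, leaving an effective divisor. Your characterization of $v_1$-reduced divisors, $D(v_i) \leq b-a+i-1$ for $i \geq 2$, is also correct, since Dhar's fire propagates left to right along this chain.

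The gap is in the lower bounds, and it is exactly the ``mixed configurations'' you defer at the end --- that bookkeeping is the actual content of the proof, not a routine verification. In the $\gon_1 \geq a$ argument, from a single vertex $v_j$ with $D(v_j)=0$ you correctly extract $D(v_1) \geq b-a+2$, but the jump to $\deg(D) \geq b$ silently assumes that every other vertex carries at least one chip, which is precisely what is in question. When two or more of the $v_j$ ($j \geq 2$) carry zero chips, the cascade aimed at the larger index is no longer the $v_1$-reduced form (an intermediate vertex with zero chips is driven to $-1$), and repairing it costs $v_1$ a further $b-a+2$ chips per additional zero. Concretely, on $B^*_{5,5}$ the $v_1$-reduced divisor $(2,0,0,1,1)$ has degree $4 < a = 5$ and passes every test you state: $D(v_1) = 2 = b-a+2$, and $D - v_2$ reduces to the effective divisor $(0,1,0,1,1)$. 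Yet it fails for $E = v_3$, whose $v_1$-reduced form is $(-2,1,2,1,1)$; it also shows directly that $D(v_1) \geq b-a+2$ does not imply $\deg(D) \geq b$. A complete proof must quantify how each additional zero inflates the required value of $D(v_1)$, or proceed by induction on $a$ via restriction to the subgraph $B^*_{a-1,b-1}$, in the style of Lemma~\ref{Lem:GlueReduced} and Section~\ref{Sec:Bananas}. The same issue, compounded, afflicts your $\gon_2 \geq b+1$ argument: the count $(b-a+2) + (a-1) = b+1$ again presupposes that none of $D(v_2), \ldots, D(v_a)$ vanish, and the case where some do is left open.
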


The \emph{2-dimensional n by m rook graph} is the Cartesian product of the complete graphs $K_n$ and $K_m$.  The vertices can be thought of as the squares of an $n \times m$ chessboard, in which two vertices are adjacent if a rook can move from one to the other.  By convention, we assume throughout that $m \geq n$.  In \cite{Speeter}, Speeter computes the first 3 gonalities of these rook graphs.

\begin{lemma}\cite{Speeter}
\label{Lem:Rook}
If $2 \leq n \leq m$, then
\begin{align*}
\gon_1(K_n \square K_m) &= (n-1)m \\
\gon_2(K_n\square K_m) &= nm -1 \\
\gon_3(K_n\square K_m) &= nm.
\end{align*}
\end{lemma}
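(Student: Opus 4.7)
The plan is to split this into three matched upper and lower bounds, one for each of $\gon_1$, $\gon_2$, and $\gon_3$ on $K_n \square K_m$. Throughout I use that every vertex has degree $n+m-2$, that rows induce $K_m$-cliques and columns induce $K_n$-cliques, and that pairs of rows (or columns) intersect only in single vertices.

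For the upper bounds, I would exhibit three explicit divisors and certify their ranks via Dhar's burning algorithm. For $\gon_3 \leq nm$, take the all-ones divisor $D_3 = \sum_v v$: for any effective $E$ of degree $3$, either $D_3 - E$ is already effective (if $E$ has three distinct support points) or else $E$ is concentrated at a single vertex $v$, in which case firing $V \smallsetminus \{v\}$ supplies an additional $n+m-2 \geq 2$ chips at $v$ and clears all debt. For $\gon_1 \leq (n-1)m$, take $D_1 = \sum_{i \neq 1,\, j} (i,j)$, placing one chip on every vertex outside a distinguished row; firing the complement of that row redistributes $n-1$ chips onto each of its vertices, enough to cover a chip request at any target vertex. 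For $\gon_2 \leq nm - 1$, I would take $D_2 = D_3 - v^*$ for a well-chosen vertex $v^*$ and verify rank $\geq 2$ by case analysis on the support of the subtracted effective $E$ of degree $2$; the only delicate case is when $v^*$ lies in the same row or column as the target, which requires iterating the firing of complements of single vertices until all debt is cleared.

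For the lower bounds, I would rely on structural properties of the rook graph. The bound $\gon_1 \geq (n-1)m$ can be obtained from a scramble-number argument: the rows $\{R_i\}$ together with the columns $\{C_j\}$ form a scramble in which each member is a clique and any two members share a vertex, and a minimum chip configuration witnessing positive rank must accumulate at least $n-1$ chips worth of obstruction per column, giving the claimed total. For $\gon_2 \geq nm - 1$ and $\gon_3 \geq nm$, I would analyze the $v$-reduced form of a putative divisor $D$ of the required rank: if $D$ is $v$-reduced of rank $\geq r$ then $D(v) \geq r$ and $D - r v$ is again $v$-reduced, so by letting $v$ range over the entire vertex set and consolidating the row-by-row constraints coming from Dhar's algorithm, one aggregates the chip counts into the claimed lower bounds.

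The main obstacle is precisely the lower bound for $\gon_2$ and $\gon_3$. A scramble or treewidth argument alone yields only $(n-1)m$, so to tighten this by the final $m-2$ and $m-1$ chips respectively one must genuinely exploit the interaction between the row and column cliques: movements of chips within a single clique are cheap, but moving two or three independent witnesses simultaneously across the grid is quantifiably more expensive, and formalizing this extra cost is the technical heart of Speeter's proof in \cite{Speeter}.
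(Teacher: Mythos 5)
The paper offers no proof of this lemma to compare against: it is quoted directly from \cite{Speeter} and used as a black box, so the only question is whether your proposal stands on its own. It does not. The decisive gaps are exactly where you locate them, in the lower bounds for $\gon_2$ and $\gon_3$. The standard fact you invoke --- that a $v$-reduced divisor of rank at least $r$ satisfies $D(v) \geq r$ --- cannot be ``aggregated over all $v$,'' because the $v$-reduced representative of $D$ is a different divisor for each choice of $v$; summing the pointwise inequalities is not legitimate, and ``consolidating the row-by-row constraints coming from Dhar's algorithm'' is not specified concretely enough to check. Since these two lower bounds are the entire content of the lemma beyond the easy $(n-1)m$ bound, the proposal is a plan rather than a proof, as you yourself concede in your final paragraph.

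There are also genuine errors in the parts you do spell out. In the upper bound $\gon_3 \leq nm$, the case analysis for the all-ones divisor omits effective $E$ of degree $3$ supported on exactly two vertices (e.g.\ $E = 2v + w$ with $w$ adjacent to $v$); there, firing $V \smallsetminus \{v\}$ leaves debt on $w$, so a single firing move does not suffice and one needs a more careful reduction argument. In the lower bound for $\gon_1$, the collection of all rows and all columns does not satisfy ``any two members share a vertex'': two distinct rows are disjoint, as are two distinct columns, so the scramble you describe is not well formed as stated and its order would in any case have to be computed, not asserted. None of this means the strategy is wrong --- a careful analysis of reduced divisors on the rook graph is indeed how the result is established in \cite{Speeter} --- but as written the proposal does not prove the statement.
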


\section{Proofs of Theorems~\ref{Thm:Closed}-\ref{Thm:G3zBig}}
\label{Sec:FirstProofs}

In this section, we prove many of the main theorems.  The central construction is the following.  Given two graphs $G_1$ and $G_2$, and vertices $v_1 \in V(G_1)$, $v_2 \in (G_2)$, we ``put them together'' by connecting $v_1$ to $v_2$ with a number of parallel edges, as in Figure~\ref{fig:glue}.

\begin{figure}[h]
    \centering
    \begin{tikzpicture}

\draw (0,0) circle (2);
\draw (5,0) circle (2);
\draw (0,0) node {$G_1$};
\draw (5,0) node {$G_2$};
\draw (1.5,-0.3) node {\footnotesize$v_1$};
\draw (3.5,-0.3) node {\footnotesize$v_2$};
\draw (2.5,-1) node {\footnotesize$\ell$};

\draw (1.5,0) -- (3.5,0);
\draw (1.5,0) to [out=45,in=135,looseness=1] (3.5,0);
\draw (1.5,0) to [out=315,in=225,looseness=1] (3.5,0);
\draw (1.5,0) to [out=55,in=125,looseness=1.5] (3.5,0);
\draw (1.5,0) to [out=305,in=235,looseness=1.5] (3.5,0);

\end{tikzpicture}
\caption{Gluing two graphs together}
    \label{fig:glue}
\end{figure}
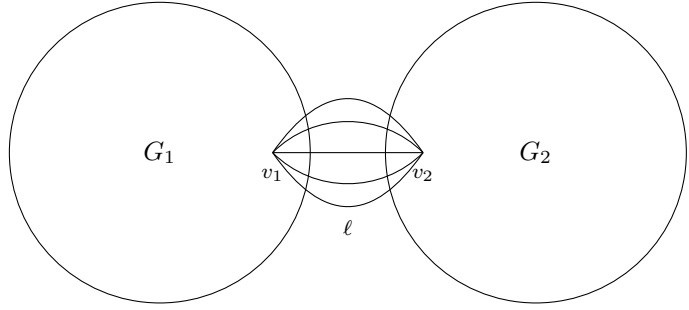

\begin{lemma}
\label{Lem:GlueReduced}
Let $G_1$ and $G_2$ be graphs, let $v_1 \in V(G_1)$, $v_2 \in V(G_2)$, and let $G$ be the graph obtained by connecting $v_1$ to $v_2$ with $\ell$ edges, as in Figure~\ref{fig:glue}.  If $D$ is a $v_1$-reduced divisor on $G$, then $\rk(D \vert_{G_1}) \geq \rk(D)$.
\end{lemma}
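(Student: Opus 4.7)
The plan is to show that for every effective divisor $E_1$ on $G_1$ with $\deg(E_1) \leq \rk(D)$, the divisor $D|_{G_1} - E_1$ is equivalent on $G_1$ to an effective divisor, which yields $\rk(D|_{G_1}) \geq \rk(D)$. I would begin with two preliminary observations. Since no vertex in $V(G_1) \setminus \{v_1\}$ has an edge to $V(G_2)$, and no vertex in $V(G_2) \setminus \{v_2\}$ has an edge to $V(G_1)$, Dhar's burning algorithm on $G$ starting at $v_1$ decomposes into essentially independent burnings on $G_1$ and on $G_2$ (the latter ignited at $v_2$ via the $\ell$ bridge edges). So the hypothesis that $D$ is $v_1$-reduced on $G$ already forces $D|_{G_1}$ to be $v_1$-reduced on $G_1$ and $D|_{G_2}$ to be $v_2$-reduced on $G_2$. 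Moreover, applying the reducedness condition to $A = V(G_2)$, on which $v_2$ is the only vertex that loses chips when $A$ is fired and loses exactly $\ell$, gives $D(v_2) < \ell$.

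Now fix an effective $E_1$ on $G_1$ with $\deg(E_1) \leq \rk(D)$, and let $D_1^\ast$ be the $v_1$-reduced representative of $D|_{G_1} - E_1$ on $G_1$, witnessed by a chip-firing function $\phi_1 \colon V(G_1) \to \mathbb{Z}$ normalized so that $\phi_1(v_1) = 0$ and $D_1^\ast = D|_{G_1} - E_1 - \operatorname{div}_{G_1}(\phi_1)$. I would extend $\phi_1$ to $\phi \colon V(G) \to \mathbb{Z}$ by declaring $\phi \equiv 0$ on $V(G_2)$. Since $\phi$ is constant on $V(G_2) \cup \{v_1\}$, the $\ell$ bridge edges contribute nothing to $\operatorname{div}_G(\phi)$ at $v_1$ or $v_2$, and no edge lying entirely in $G_2$ contributes either. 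A short computation then shows $\operatorname{div}_G(\phi)$ agrees with $\operatorname{div}_{G_1}(\phi_1)$ on $V(G_1)$ and vanishes on $V(G_2)$, so the divisor $\widehat{D} := D - E_1 - \operatorname{div}_G(\phi)$ satisfies $\widehat{D}|_{G_1} = D_1^\ast$, $\widehat{D}|_{G_2} = D|_{G_2}$, and $\widehat{D} \sim D - E_1$ on $G$.

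The central step is checking that $\widehat{D}$ is $v_1$-reduced on $G$. Effectiveness away from $v_1$ is immediate. For Dhar's algorithm on $G$ from $v_1$, the burning in $V(G_1)$ is exactly Dhar on $G_1$ applied to $D_1^\ast$, which consumes all of $G_1$; the vertex $v_2$ ignites at the first step because $\widehat{D}(v_2) = D(v_2) < \ell$; and the burning in $V(G_2)$ then mirrors Dhar on $G_2$ from $v_2$ applied to $D|_{G_2}$, which consumes all of $G_2$. By uniqueness of reduced representatives, $\widehat{D}$ is the $v_1$-reduced representative of $D - E_1$ on $G$. Since $\rk(D) \geq \deg(E_1)$, $D - E_1$ is equivalent to an effective divisor on $G$, so $\widehat{D}$ itself must be effective. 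Hence $D_1^\ast(v_1) = \widehat{D}(v_1) \geq 0$, and $D|_{G_1} - E_1 \sim D_1^\ast$ is effective on $G_1$.

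I expect the main obstacle to be the $v_1$-reducedness verification for $\widehat{D}$ on $G$: one must confirm that chip-firing moves internal to $G_1$ (used to reduce $D|_{G_1} - E_1$) can be lifted to $G$ without perturbing $D|_{G_2}$, and that the resulting divisor still passes the global burning test. This is exactly where the inequality $D(v_2) < \ell$ and the block-like independence of the two subgraph burnings become essential.
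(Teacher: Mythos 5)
Your proof is correct, but it takes a genuinely different route from the paper's. The paper argues directly on $G$: it takes the nested sequence of set-firings $U_1 \subseteq \cdots \subseteq U_k$ witnessing that $D - E$ is equivalent to an effective divisor, notes that $v_1$-reducedness forces $v_1 \in U_i$ for all $i$, and then replaces each $U_i$ by $W_i = U_i \cup V(G_2)$; since both endpoints of every bridge edge lie in $W_i$, no chips cross between $G_1$ and $G_2$, so the modified sequence restricts to a firing sequence on $G_1$ alone that clears the debt of $D\vert_{G_1} - E$. You instead go the other direction: you reduce $D\vert_{G_1} - E_1$ on $G_1$, lift the reducing function to $G$ by extending it by zero on $V(G_2)$, verify via the block decomposition of Dhar's algorithm (using your observations that $D\vert_{G_2}$ is $v_2$-reduced and $D(v_2) < \ell$) that the lifted divisor is the $v_1$-reduced representative of $D - E_1$ on $G$, and then invoke uniqueness of reduced representatives together with the criterion that a class containing an effective divisor has an effective reduced representative. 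Your argument needs more machinery (uniqueness of reduced divisors, the effectivity criterion, and the two preliminary structural facts), but in exchange it identifies the reduced representative of $D - E_1$ on $G$ explicitly, which is more information than the paper extracts; the paper's set-modification trick is more elementary and self-contained. One small point to tighten: your derivation of $D(v_2) < \ell$ from firing $A = V(G_2)$ implicitly assumes the resulting non-effectivity occurs at $v_2$ rather than at $v_1$; this is harmless because you may assume $D$ is effective (otherwise $\rk(D) = -1$ and the lemma is vacuous), or you can extract $D(v_2) < \ell$ directly from the requirement that $v_2$ ignites in the burning characterization of reducedness.
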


\begin{proof}
Let $E$ be an effective divisor of degree $\rk (D)$ on $G_1$.  By definition, $D-E$ is equivalent to an effective divisor.  There exists a sequence of subsets:
\[
 U_1 \subseteq U_2 \subseteq \cdots \subseteq U_k \subset V(G)
\]
and a sequence of effective divisors $D_0 , \ldots , D_k$ such that:
\begin{enumerate}
\item  $D_0 = D$,
\item  $D_k - E$ is effective, and
\item  $D_i$ is obtained from $D_{i-1}$ by firing $U_i$.
\end{enumerate}
Since $D_0$ is $v_1$-reduced and $D_1$ is effective, we must have $v_1 \in U_1$.  Thus, $v_1 \in U_i$ for all $i$.

Now, consider the sequence of subsets $W_i = U_i \cup V(G_2)$.  Let $D'_0 = D$ and let $D'_i$ be the divisor obtained from $D'_{i-1}$ by firing $W_i$.  Since $V(G_2) \cup \{ v_1 \} \subseteq W_i$ for all $i$, we have $D'_i (v) = D(v)$ for all $v \in V(G_2)$.  Since $D$ is $v_1$-reduced, it follows that $D'_k (v) \geq 0$ for all $v \in V(G_2)$.  Note also that $D'_i (v) = D_i (v)$ for all $v \in V(G_1) \smallsetminus \{ v_1 \}$, and $D'_i (v_1) \geq D_i (v_1)$.  It follows that $D'_k - E$ is effective.

Finally, note that firing $W_i$ passes no chips from $G_1$ to $G_2$ or from $G_2$ to $G_1$.  Thus, $D \vert_{G_1}$ is equivalent to $D'_k \vert_{G_1}$ on $G_1$.  In this way, $D \vert_{G_1} - E$ is equivalent on $G_1$ to an effective divisor.  Since $E$ was arbitrary, we see that $\rk(D \vert_{G_1}) \geq \rk(D)$. 
\end{proof}

If the number of parallel edges between $v_1$ and $v_2$ is large enough, then the $r$th gonality of the graph $G$ is the sum of the $r$th gonalities of the graphs $G_1$ and $G_2$.

\begin{proposition}
\label{Prop:ClosedUnderSum}
Let $G_1$ and $G_2$ be graphs, let $v_1 \in V(G_1)$, $v_2 \in V(G_2)$, and let $G$ be the graph obtained by connecting $v_1$ to $v_2$ with $\ell$ edges.  If $\ell \geq \gon_r (G_1) + \gon_r (G_2)$, then
\[
\gon_k (G) = \gon_k (G_1) + \gon_k (G_2) \mbox{ for all } k \leq r .
\]
\end{proposition}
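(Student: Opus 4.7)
The statement splits into the two inequalities $\gon_k(G) \leq \gon_k(G_1) + \gon_k(G_2)$ and its reverse, and I expect only the lower bound to use the hypothesis $\ell \geq \gon_r(G_1) + \gon_r(G_2)$.

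For the upper bound, the plan is to let $D_i$ be a divisor on $G_i$ of minimum degree with rank at least $k$, and take $D := D_1 + D_2$ as a divisor on $G$. To verify $\rk_G(D) \geq k$, I would decompose an arbitrary effective $E$ of degree $k$ on $G$ uniquely as $E_1 + E_2$ with $E_i$ supported on $V(G_i)$. Since $\deg(E_i) \leq k \leq \rk_{G_i}(D_i)$, each $D_i - E_i$ is equivalent on $G_i$ to an effective divisor $F_i$ via some firing counts $\mathbf{c}_i$. Performing both sequences of firings on $G$ produces $F_1 + F_2$ plus a bridge correction of $\ell(c_1(v_1) - c_2(v_2))$ chips shifted from $v_1$ to $v_2$, because only the firings of $v_1$ and $v_2$ themselves behave differently on $G$ than on $G_i$. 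Each $\mathbf{c}_i$ is determined only up to integer multiples of the all-ones vector on $V(G_i)$, so I can shift them to arrange $c_1(v_1) = c_2(v_2)$; then $D - E \sim F_1 + F_2$ on $G$, which is effective.

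For the lower bound, I would argue by contradiction, supposing some divisor $D$ on $G$ has rank at least $k$ and degree $d < \gon_k(G_1) + \gon_k(G_2) \leq \ell$. Replacing $D$ by its $v_1$-reduced representative $D^{(1)}$, Lemma~\ref{Lem:GlueReduced} gives $\rk_{G_1}(D^{(1)}|_{G_1}) \geq k$, hence $\deg(D^{(1)}|_{G_1}) \geq \gon_k(G_1)$ and $\deg(D^{(1)}|_{G_2}) < \gon_k(G_2)$. Since $D^{(1)}|_{G_2}$ is effective on $G_2$ with degree below $\gon_k(G_2)$, its $G_2$-rank is less than $k$, so some effective $E_2$ on $G_2$ of degree $k$ witnesses that $D^{(1)}|_{G_2} - E_2$ is not equivalent to an effective divisor on $G_2$. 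Nevertheless, $D^{(1)} - E_2 \sim F$ on $G$ for some effective $F$; writing the firing counts used as $(c_w)_{w \in V(G)}$ and setting $t = c_{v_1} - c_{v_2}$, a bookkeeping comparison between the Laplacian of $G$ and those of $G_1$ and $G_2$ yields $\deg(F|_{G_1}) = \deg(D^{(1)}|_{G_1}) - \ell t$ and the on-$G_2$ equivalence $D^{(1)}|_{G_2} - E_2 \sim F|_{G_2} - \ell t \mathbf{e}_{v_2}$. Effectiveness of $F|_{G_1}$ forces $\ell t \leq \deg(D^{(1)}|_{G_1}) \leq d < \ell$, so $t \leq 0$, and then $F|_{G_2} - \ell t \mathbf{e}_{v_2}$ is effective on $G_2$ (being $F|_{G_2}$ with $-\ell t \geq 0$ extra chips piled at $v_2$), contradicting the choice of $E_2$.

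The main obstacle is to isolate the correct scalar $t$ measuring the net chip flow across the bridge and to recognize that the large-$\ell$ hypothesis is exactly what forces $t \leq 0$ once $\deg(D)$ is too small; after that, the residual equivalence on $G_2$ is automatically effective, and everything else reduces to computations on $G_1$ and $G_2$ via Lemma~\ref{Lem:GlueReduced}.
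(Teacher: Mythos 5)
Your proof is correct, and its overall architecture matches the paper's: the upper bound by summing rank-$k$ divisors of minimal degree on each piece, and the lower bound by showing that a deficient side forces chips across the $\ell$-edge bridge, which is impossible when $\deg(D) < \ell$. The differences are in how the lower bound is executed. The paper argues directly: without loss of generality $\deg(D\vert_{G_1}) < \gon_k(G_1)$, so $D\vert_{G_1}$ has rank less than $k$, chips must pass from $G_2$ to $G_1$, and "since there are $\ell$ edges" the degree must be at least $\ell$ --- the last step being asserted rather than computed, and Lemma~\ref{Lem:GlueReduced} not being invoked at all. You instead pass to the $v_1$-reduced representative, use Lemma~\ref{Lem:GlueReduced} to identify $G_2$ as the deficient side, and make the "chips must cross" step precise by isolating the net flow $t = c_{v_1} - c_{v_2}$ and checking that $\deg(F\vert_{G_1}) = \deg(D^{(1)}\vert_{G_1}) - \ell t \geq 0$ forces $t \leq 0$, whence the obstruction on $G_2$ is solved internally --- a contradiction. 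This buys you a fully rigorous version of the step the paper leaves informal (intermediate divisors in a firing sequence need not be effective, so "passing chips across $\ell$ edges requires $\ell$ chips" does require an argument), at the cost of more Laplacian bookkeeping; your upper-bound verification that $\rk_G(D_1+D_2) \geq k$ via normalizing $c_1(v_1) = c_2(v_2)$ is likewise a detail the paper omits. Both arguments use the hypothesis $\ell \geq \gon_r(G_1)+\gon_r(G_2)$ only through $\gon_k(G_i) \leq \gon_r(G_i)$, as they should.
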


\begin{proof}
Let $k \leq r$, let $D_1$ be a divisor of rank $k$ and degree $\gon_k (G_1)$ on $G_1$, and let $D_2$ be a divisor of rank $k$ and degree $\gon_k (G_2)$ on $G_2$.  Then the divisor $D_1 + D_2$ has rank at least $k$ on $G$, so $\gon_k (G) \leq \gon_k (G_1) + \gon_k (G_2)$.

For the reverse inequality, let $D$ be a divisor of rank at least $k$ on $G$.  We must show that $\Deg (D) \geq \gon_k (G_1) + \gon_k (G_2)$.  If $\Deg (D \vert_{G_i}) \geq \gon_k (G_i)$ for $i=1,2$, then $\Deg (D) \geq \gon_k (G_1) + \gon_k (G_2)$.  On the other hand, suppose without loss of generality that $\Deg (D \vert_{G_1}) < \gon_k (G_1)$.  Then $D \vert_{G_1}$ has rank less than $k$, so we must be able to pass chips from $G_2$ to $G_1$.  Since there are $\ell$ edges between $G_1$ and $G_2$, it follows that $\Deg(D) \geq \ell \geq \gon_k (G_1) + \gon_k (G_2)$.
\end{proof}

Theorem~\ref{Thm:Closed} is a direct corollary.

\begin{proof}[Proof of Theorem~\ref{Thm:Closed}]
Let $\vec{x}, \vec{y} \in \mathcal{G}_r$.  By definition, there exist graphs $G_1$ and $G_2$ such that $\gon_k (G_1) = x_k$ and $\gon_k (G_2) = y_k$ for all $k \leq r$.  By Proposition~\ref{Prop:ClosedUnderSum}, there exists a graph $G$ with $\gon_k (G) = x_k + y_k$ for all $k \leq r$.  Moreover, if $G_1$ has genus $g_1$ and $G_2$ has genus $g_2$, then by Proposition~\ref{Prop:ClosedUnderSum}, for any $\ell \geq \gon_r (G_1) + \gon_r (G_2)$, there exists such a graph $G$ of genus $g = g_1 + g_2 + \ell$.
\end{proof}

Using the fact that $\mathcal{G}_2$ is closed under addition, we provide a short proof of Theorem~\ref{Thm:G2}.

\begin{proof}[Proof of Theorem~\ref{Thm:G2}]
Let $G$ be a graph.  By Lemma~\ref{Lem:Increasing}, we have $\gon_2 (G) \geq \gon_1 (G) + 1$.  By Lemma~\ref{Lem:Subadditive}, $\gon_2(G) \leq 2 \cdot \gon_1 (G)$.  In other words, $\mathcal{G}_2 \subseteq \mathcal{C}_2$.

We now show the reverse containment.  In other words, we show that if $x+1 \leq y \leq 2x$, then there exists a graph $G$ with $\gon_1 (G) = x$ and $\gon_2 (G) = y$.  We proceed by induction on $y-x$.  For the base case, when $y=x+1$, by Lemma~\ref{Lem:Complete}, the complete graph on $x+1$ vertices $K_{x+1}$ satisfies
\[
x = \gon_1 (K_{x+1}) = \gon_2 (K_{x+1}) - 1 .
\]
For the inductive step, if $y \geq x+2$, then since $(y-2)-(x-1) = y-x-1$, by induction $(x-1,y-2) \in \mathcal{G}_2$.  If $T$ is a tree, then $\gon_r (T) = r$ for all $r$, so $(1,2) \in \mathcal{G}_2$.  By Theorem~\ref{Thm:Closed}, therefore, $(x,y)$ is a reducible element of $\mathcal{G}_2$, and the result follows.
\end{proof}

A similar strategy allows us to construct triples $(x,y,z) \in \mathcal{G}_3$ where $z$ is large relative to $x$.

\begin{proof}[Proof of Theorem~\ref{Thm:G3zBig}]
Let $(x,y,z) \in \mathbb{N}^3$ satisfy $x<y<z$, $y \leq 2x$, $z \leq x+y$.  We first show that, if $z=2x$, then $(x,y,z) \in \mathcal{G}_3$.  If $y=x+1$, then by Lemma~\ref{Lem:Complete}, the first 3 terms of the gonality sequence of the complete graph $K_y$ are $(x,x+1,2x)$, so $(x,x+1,2x) \in \mathcal{G}_3$.  Similarly, if $y=2x-1$, then by Lemma~\ref{Lem:CompleteBipartite}, the first 3 terms of the gonality sequence of the complete bipartite graph $K_{x,x}$ are $(x,2x-1,2x)$, so $(x,2x-1,2x) \in \mathcal{G}_3$.  Otherwise, if $x+2 \leq y \leq 2x-2$, then the first 3 terms of the gonality sequence of the complete graph $K_{2x-y+1}$ are $(2x-y, 2x-y+1, 4x-2y)$ and the first 3 terms of the gonality sequence of the complete bipartitie graph $K_{y-x,y-x}$ are $(y-x, 2y-2x-1, 2y-2x)$.  By Theorem~\ref{Thm:Closed}, therefore,  we have
\[
(2x-y, 2x-y+1, 4x-2y) + (y-x, 2y-2x-1, 2y-2x) = (x, y, 2x) \in \mathcal{G}_3 .
\]

We now consider cases where $2x < z \leq x+y-1$.  If $y=2x$, then the first 3 terms of the gonality sequence of the complete bipartitie graph $K_{x,z-x}$ are $(x,2x,z)$, so $(x,2x,z) \in \mathcal{G}_3$.  If $y=x+2$, then by assumption, $z = 2x+1$.  As above, the first 3 terms of the gonality sequence of the complete graph $K_x$ are $(x-1, x, 2x-2)$ and the first 3 terms of the gonality sequence of a tree are $(1, 2, 3)$.  By Theorem~\ref{Thm:Closed}, therefore,  we have
\[
(x-1, x, 2x-2) + (1, 2, 3) = (x, x+2, 2x+1) \in \mathcal{G}_3 .
\]

Finally, we show that, if $x+3 \leq y \leq 2x-1$, then $(x,y,z) \in \mathcal{G}_3$.  Similar to the above, the first 3 terms of the gonality sequence of the complete graph $K_{2x-y+2}$ are $(2x-y+1, 2x-y+2, 4x-2y+2)$.  Since $z > 2x$, we have $z+y-3x-1 > y-x-1$, and since $z \leq x+y-1$, we have $z+y-3x-1 \leq 2(y-x-1)$.  It follows that the first 3 terms of the gonality sequence of the complete bipartitie graph $K_{y-x-1,z+y-3x-1}$ are $(y-x-1, 2y-2x-2, 2y-4x-2+z)$.  By Theorem~\ref{Thm:Closed}, therefore,  we have
\[
(2x-y+1, 2x-y+2, 4x-2y+2) + (y-x-1, 2y-2x-2, 2y-4x-2+z) = (x,y,z) \in \mathcal{G}_3 .
\]
\end{proof}

\begin{proof}[Proof of Lemma~\ref{Lem:CloseTo1}]
If $q \geq 2$, the conlcusion follows from Theorem~\ref{Thm:G3zBig}.  If $1 < q < 2$, then there exists an integer $n \geq 2$ such that $\frac{n+1}{n} \leq q < \frac{n}{n-1}$.  If $q = \frac{n+1}{n}$, then by Lemma~\ref{Lem:Rook}, for all $m \geq n+1$, we have $(nm, (n+1)m-1, (n+1)m) \in \mathcal{G}_3$, and the conclusion follows.

If $q > \frac{n+1}{n}$, let $\epsilon_1 = q - \frac{n+1}{n}$, let $\epsilon_2 = \frac{n-(n-1)q}{n+1}$, and let $\epsilon = \min \{ \epsilon_1 , \epsilon_2 \}$.  By assumption, $\epsilon > 0$.  We can therefore write $q = \frac{z}{x}$, where $x \geq \frac{1}{\epsilon}$.  Finally, let $m = nz - (n+1)x$ and let $m' = nx - (n-1)z$.  By construction, $m \geq n$ and $m' \geq n+1$.  By Lemma~\ref{Lem:Rook} and Theorem~\ref{Thm:Closed}, we have
\[
((n-1)m, nm-1, nm) + (nm', (n+1)m'-1, (n+1)m')  = (x, z-2, z) \in \mathcal{G}_3 . 
\]
\end{proof}

\section{Third Gonality of the Graphs $B^*_{a,b}$}
\label{Sec:Bananas}

The 1st and 2nd gonalities of the graph $B^*_{a,b}$ are computed in \cite{ADMYY}.  In this section, we compute the 3rd gonalities of these graphs.  We first consider divisors on $B^*_{a,b}$ of rank 3 with a large number of chips on $v_a$.

\begin{lemma}
\label{Lem:Banana3divisor}
Let $D$ be a divisor of at least rank $3$ on the graph $B^{*}_{a,b}$.  If $D(v_{a}) \geq b + 1$, then $\Deg(D) \geq a + b$.
\end{lemma}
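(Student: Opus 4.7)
Plan: The first step is a peeling reduction. I observe that $D - v_a$ has rank at least $2$: for any effective divisor $F$ of degree $2$, the divisor $v_a + F$ is effective of degree $3$, so $D - (v_a + F) = (D - v_a) - F$ is equivalent to an effective divisor. Combined with $\gon_2(\banana{a}{b}) = b + 1$ from Lemma~\ref{Lem:BStar}, this immediately yields the weaker bound $\Deg D \geq b + 2$, which already suffices when $a = 2$. Moreover, $(D - v_a)(v_a) = D(v_a) - 1 \geq b$, so the divisor $D-v_a$ carries an ``extra'' chip on $v_a$ beyond what $\gon_2$ alone would require.

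For $a \geq 3$, this extra reservoir of chips on $v_a$ must be leveraged more carefully, since the rank-$2$ gonality is attained by the divisor $(b+1)v_a$, which already saturates $D(v_a) \geq b$ without forcing any chips onto $v_1, \ldots, v_{a-1}$. The natural way to pick up the remaining $a-2$ chips is to exploit the full rank-$3$ condition through judicious choices of degree-$3$ effective divisors. For each $i \in \{1, \ldots, a-1\}$, the divisor $v_i + 2v_a$ is effective of degree $3$, so $D - v_i - 2v_a$ is equivalent to an effective divisor. Since $(D - v_i - 2v_a)(v_a) \geq b - 1$, the $v_1$-reduced form of $D - v_i - 2 v_a$ must have nonnegative $v_1$-coefficient. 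Running Dhar's burning algorithm on this difference---using the edge counts $b - a + j + 1$ between $v_j$ and $v_{j+1}$---translates this nonnegativity into lower bounds on $D(v_j)$ for $j \leq i$ that depend sensitively on $i$.

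Combining the constraints obtained by varying $i$ across $\{1, \ldots, a-1\}$, together with the pointwise lower bound $D(v_a) \geq b + 1$, should force $D(v_a) + \sum_{j < a} D(v_j) \geq a + b$, giving the desired bound. The main obstacle is carrying out the Dhar's-algorithm bookkeeping uniformly, especially in configurations where some $D(v_j)$ is negative or zero for $j < a$. I expect a case analysis (splitting on how many of $D(v_1), \ldots, D(v_{a-1})$ are positive) or an induction on $a$---using the identification of $\banana{a}{b}$ with $\banana{a-1}{b-1}$ joined to $v_a$ by $b$ parallel edges and Lemma~\ref{Lem:GlueReduced}---to be needed to complete the argument cleanly.
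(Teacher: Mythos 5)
There are two genuine gaps here. The first is in your base case: you deduce $\Deg(D) \geq b+2$ from $\rk(D - v_a) \geq 2$ together with the claim $\gon_2(\banana{a}{b}) = b+1$, but Lemma~\ref{Lem:BStar} gives that equality only for $b \leq 2a-1$, which for $a=2$ means $b \leq 3$. For $b \geq 4$ one has $\gon_2(\banana{2}{b}) = \min\{4,b+1\} = 4$ (compare Lemma~\ref{Lem:Bne}), so your argument yields only $\Deg(D) \geq 5$, far short of $b+2$. This is not a corner case: the lemma is invoked in Corollary~\ref{Cor:Banana3divisor} precisely when $b > 2a$, and the induction you envisage (peeling $\banana{a}{b}$ down to $\banana{a-1}{b-1}$) terminates at $a=2$ with $b$ arbitrary. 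The paper's base case avoids $\gon_2$ entirely and uses the hypothesis $D(v_2) \geq b+1$ directly: subtracting $2v_2 + v_1$ leaves only $b-1 < b$ chips on $v_2$, so $v_2$ cannot fire and one must already have $D(v_1) \geq 1$, whence $\Deg(D) \geq b+2$.

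The second gap is that the inductive step is never carried out: ``should force'' and ``I expect a case analysis \dots to be needed'' defer exactly the content that constitutes the proof. Your scheme of subtracting $v_i + 2v_a$ for each $i$, running Dhar's algorithm, and ``combining the constraints'' is not shown to produce the missing $a-2$ chips, and it is not clear that the individual lower bounds on the $D(v_j)$ obtained this way can be added; an adversarial configuration could concentrate its surplus on a single $v_j$ and satisfy each constraint separately. For comparison, the paper's induction on $a$ is short and does not need this bookkeeping: viewing $\banana{a}{b}$ as $\banana{a-1}{b-1}$ joined to $v_a$ by $b$ parallel edges, it splits on the value of $D(v_a)$. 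If $D(v_a) = b+1$, Lemma~\ref{Lem:GlueReduced} forces the restriction to $\banana{a-1}{b-1}$ to have positive rank, hence degree at least $\gon_1(\banana{a-1}{b-1}) = a-1$, giving $\Deg(D) \geq a+b$. If $b+2 \leq D(v_a) < a+b$, fire $v_a$ once (it can fire only once), leaving at least $2$ chips on $v_a$ and at least $b$ on $v_{a-1}$; the restriction to $\banana{a-1}{b-1}$ then has rank at least $3$ with at least $b$ chips on $v_{a-1}$, and the inductive hypothesis gives at least $a+b-2$ chips there. You correctly identified this decomposition as one possible route, but the proposal becomes a proof only once that route (or your Dhar analysis) is actually executed, and once the base case is repaired for large $b$.
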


\begin{proof}
For the base case consider the banana graph $B^{*}_{2,b}$. We will proceed by cases.
\begin{enumerate}
\item If $D(v_2) = b + 1 $, consider the divisor $D - 2 \cdot (v_2) - (v_1)$.  This divisor has $b-1$ chips on $v_2$, so we must have $D(v_1) \geq 1$.  Hence, $\Deg(D) \geq a + b.$
\item If $D(v_2) \geq b + 2$, then $\Deg(D) \geq b + 2 = a + b$.
\end{enumerate} 

Now for the induction step assume that the theorem holds for the banana graph $B^{*}_{a-1,b-1}$.  If $D(v_a) \geq a+b$, we are done.  If $D(v_a) = b+1$, then by Lemma~\ref{Lem:GlueReduced}, we see that the restriction of $D$ to $B^*_{a-1,b-1}$ must have rank at least 1.  By Lemma~\ref{Lem:BStar}, it follows that $\Deg (D \vert_{B^*_{a-1,b-1}}) \geq a$, hence $\Deg (D) \geq a+b+1$.  Finally, if $b+2 \leq D(v_a) < a+b$, consider the equivalent divisor obtained by firing $v_a$.  This divisor has at least 2 chips on $v_a$.  Note that $v_a$ can only be fired once since $a < b$.  By Lemma~\ref{Lem:GlueReduced}, the restriction of this divisor to the subgraph $B^{*}_{a-1,b-1}$ must have rank at least 3 and there are at least $b$ chips on $v_{a-1}$, so by the inductive hypothesis there are least $a + b - 2$ chips on this subgraph.  So $\Deg(D) \geq a + b$.
\end{proof}

\begin{corollary}
\label{Cor:Banana3divisor}
If $2a < b$, there is no divisor of rank at least 3 on the graph $B^{*}_{a.b}$ with $D(v_{a}) \geq b + 1$ and $\Deg(D) \leq 3a$.
\end{corollary}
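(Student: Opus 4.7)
The plan is to derive the corollary as an immediate consequence of Lemma~\ref{Lem:Banana3divisor}. That lemma already gives the key lower bound: if $D$ is a divisor of rank at least $3$ on $B^{*}_{a,b}$ with $D(v_a) \geq b+1$, then $\Deg(D) \geq a+b$. So the only thing left to do is combine this inequality with the hypothesis $2a < b$.

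Concretely, I would argue by contradiction: suppose there is some divisor $D$ on $B^*_{a,b}$ of rank at least $3$ with $D(v_a) \geq b+1$ and $\Deg(D) \leq 3a$. Applying Lemma~\ref{Lem:Banana3divisor} yields $\Deg(D) \geq a+b$. But the standing hypothesis $2a < b$ gives $a + b > 3a$, so $\Deg(D) > 3a$, contradicting $\Deg(D) \leq 3a$.

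There is essentially no obstacle here; all of the real work was done in the preceding lemma. The corollary is just the contrapositive, specialized to the regime $b > 2a$ where the bound $a+b$ strictly exceeds $3a$. I do not anticipate needing any additional chip-firing analysis or case distinctions beyond quoting Lemma~\ref{Lem:Banana3divisor}.
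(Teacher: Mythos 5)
Your proposal is correct and is exactly the paper's argument: quote Lemma~\ref{Lem:Banana3divisor} to get $\Deg(D) \geq a+b$ and note that $2a < b$ forces $a+b > 3a$. No further work is needed.
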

\begin{proof}
By Lemma \ref{Lem:Banana3divisor}, a divisor $D$ of rank at least $3$ with $D(v_{a}) \geq b +1 $ must have $\Deg(D) \geq a + b > 3a$.
\end{proof}

We also consider divisors on $B^*_{a,b}$ of rank 3 with a small number of chips on $v_a$.

\begin{lemma}
\label{Lem:banana3divisor2}
Let $D$ be a divisor on $\banana{a}{b}$ of rank at least $3$.  If $D(v_a) \leq 2$, then $\Deg(D) \geq a + b$.
\end{lemma}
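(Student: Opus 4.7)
My plan is to induct on $a$.  In the base case $a=2$, $\banana{2}{b}$ is the banana graph on two vertices with $b$ parallel edges, which has genus $b-1$, so by the Baker--Norine Riemann--Roch theorem every divisor of rank at least $3$ has degree at least $g+3=b+2=a+b$, regardless of the value of $D(v_2)$; the hypothesis $D(v_a)\le 2$ is not even needed in the base case.

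For the inductive step, I would first replace $D$ with its $v_{a-1}$-reduced representative $\tilde D$.  Because $v_a$'s only neighbor in $\banana{a}{b}$ is $v_{a-1}$, the reduction procedure fires sets $U$ not containing $v_{a-1}$; such a firing leaves $\tilde D(v_a)$ unchanged when $v_a\notin U$, and subtracts $b$ from it when $v_a\in U$.  The latter would push $\tilde D(v_a)$ below zero, contradicting reducedness, so $\tilde D(v_a)=D(v_a)\le 2$, and in fact $0\le \tilde D(v_a)\le 2$.  Viewing $\banana{a}{b}$ as $\banana{a-1}{b-1}$ (on $v_1,\ldots,v_{a-1}$) glued to the singleton $\{v_a\}$ via $b$ parallel edges at $v_{a-1}$, Lemma~\ref{Lem:GlueReduced} implies that $\tilde D|_{\banana{a-1}{b-1}}$ has rank at least $3$ on $\banana{a-1}{b-1}$.

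I would then split on $\tilde D(v_{a-1})$.  When $\tilde D(v_{a-1})\le 2$, the inductive hypothesis applied to $\tilde D|_{\banana{a-1}{b-1}}$ yields $\deg \tilde D|_{\banana{a-1}{b-1}}\ge (a-1)+(b-1)=a+b-2$; when $\tilde D(v_{a-1})\ge b$, Lemma~\ref{Lem:Banana3divisor} applied to the same restriction yields the same bound.  Combined with $\tilde D(v_a)=2$, either of these immediately gives $\deg \tilde D \ge a+b$.

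The main obstacle will be closing the last two chips in the cases $\tilde D(v_a)\le 1$, and handling the middle range $3\le\tilde D(v_{a-1})\le b-1$, which neither the inductive hypothesis nor Lemma~\ref{Lem:Banana3divisor} covers directly.  My plan here is to extract more from the rank hypothesis using Dhar's algorithm: since $\tilde D-3v_a$ must be equivalent to an effective divisor, there must exist $j\in\{1,\ldots,a-1\}$ for which a partial staircase firing of the sets $\{v_1,\ldots,v_k\}$ for $k=j,\ldots,a-1$ makes $\tilde D-3v_a$ effective, which forces $\tilde D(v_j)\ge b-a+j+1$ and $\tilde D(v_i)\ge 1$ for $j<i\le a-1$.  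Analogous analyses applied to $\tilde D-3v_1$, $\tilde D-(2v_1+v_2)$, and other mixed subtractions of degree $3$ produce matching chip constraints on the $v_1$-side of the chain; summing these with the $v_a$-side constraints and $\tilde D(v_a)$ gives $\deg \tilde D \ge a+b$ in every remaining configuration.  I expect the bookkeeping of these constraints to be the technically delicate part of the argument.
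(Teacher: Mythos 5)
Your base case is wrong, and the error matters. Riemann--Roch gives $\Deg(D) = \rk(D) + g - 1 - \rk(K-D)$, so the bound $\Deg(D) \geq g+3$ holds only for \emph{nonspecial} divisors; in general Riemann--Roch yields the upper bound $\gon_3 \leq g+3$, not a lower bound on the degree of every rank-$3$ divisor. Indeed, on $\banana{2}{b}$ (two vertices joined by $b$ edges) the divisor $3v_1 + 3v_2$ is special for large $b$, has rank $3$, and has degree $6 < b+2 = a+b$ whenever $b > 4$. So the hypothesis $D(v_a) \leq 2$ is essential even when $a=2$, contrary to your claim that it is not needed there. The correct base-case argument must use it: if $D(v_2) \leq 2$, subtract $D(v_2)+1$ chips from $v_2$ and the remaining $2 - D(v_2)$ from $v_1$; the resulting debt at $v_2$ can only be cleared by firing $v_1$, which sends $b$ chips across, forcing $D(v_1) \geq b + 2 - D(v_2)$ and hence $\Deg(D) \geq b+2$.

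The inductive step also does not close, as you partly acknowledge: your tools cover only the sub-case $\tilde D(v_a)=2$ with $\tilde D(v_{a-1}) \leq 2$ or $\geq b$, and the ``middle range'' together with the missing two chips when $\tilde D(v_a) \leq 1$ is deferred to an unexecuted bookkeeping scheme. The idea you are missing --- and the one the paper uses, in a direct, non-inductive argument --- is to choose the effective divisor $E$ of degree $3$ with $D(v_a)+1$ of its chips on $v_a$. Since $v_a$'s only neighbor is $v_{a-1}$, clearing the resulting debt at $v_a$ forces $D(v_{a-1}) \geq b$ in \emph{every} case, so your problematic middle range $3 \leq \tilde D(v_{a-1}) \leq b-1$ never occurs. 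Moreover, after those $b$ chips cross to $v_a$, the divisor left on $\banana{a-1}{b-1}$ must still accommodate the remaining $2 - D(v_a)$ chips of $E$; by Lemma~\ref{Lem:GlueReduced} it therefore has rank at least $2-D(v_a)$ there, so Lemma~\ref{Lem:BStar} (when $D(v_a) \in \{0,1\}$) or Lemma~\ref{Lem:Banana3divisor} (when $D(v_a)=2$, since then $D(v_{a-1}) \geq b = (b-1)+1$) supplies exactly the chips needed to reach $a+b$. This separate accounting of the $b$ crossing chips and the residual-rank chips is what closes the two-chip gap you flag; without it, your induction cannot be completed as outlined.
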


\begin{proof}
Assume without loss of generality that $D$ is $\reduced{v_{a-1}}$.  We proceed by cases.
\begin{enumerate}
\item  If $D(v_a) = 0$, then consider the divisor $D - v_a$. The resulting divisor has a debt on $v_a$, so we must have $D(v_{a-1}) \geq b$.  After moving $b$ of these chips to $v_a$ and subtracting one, by Lemma~\ref{Lem:GlueReduced}, the remaining divisor must have rank at least 2 on $\banana{a-1,}{b-1}$.  Hence, by Lemma~\ref{Lem:BStar}, there must be at least $b$ more chips on the rest of the graph.  So $\Deg(D) \geq 2b \geq a+b$.
\item  If $D(v_a) = 1$, then consider the divisor $D - 2 \cdot (v_a)$. The resulting divisor has a debt on $v_a$, so $D(v_{a-1}) \geq b$.  After moving $b$ of these chips to $v_a$ and subtracting one, by Lemma~\ref{Lem:GlueReduced} the remaining divisor must have rank at least 1 on $\banana{a-1}{b-1}$.  Hence, by Lemma~\ref{Lem:BStar}, there must be at least $a-1$ chips on the rest of the graph.  Therefore, $\Deg(D) \geq a + b$. 
\item  If $D(v_a) = 2$, then consider the divisor $D - 3 \cdot v_a$.  There is now a debt of $-1$ on $v_{a}$, so again there must be at least $b$ chips on $v_{a-1}$.  By Lemma~\ref{Lem:Banana3divisor}, there must be at least $a + b - 2$ chips on the subgraph induced by the vertices $\{v_{1}, \dots v_{a-1}\}$.  Thus, $\Deg(D) \geq (a + b -2 ) + 2 = a + b$.
\end{enumerate}
\end{proof}

Our computation of the 3rd gonality of $B^*_{a,b}$ will proceed by induction on $a$.  The following lemma establishes the base case, when $a=2$.

\begin{lemma}
\label{Lem:Banana3gonBaseCase}
If $b \geq 4,$ then $\gon_3 (B^{*}_{2,b}) = 6$.
\end{lemma}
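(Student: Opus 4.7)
The plan is to bracket $\gon_3(B^*_{2,b})$ between $6$ and $6$. Since $B^*_{2,b}$ is simply two vertices $v_1, v_2$ joined by $b$ parallel edges, every divisor is equivalent to a $v_1$-reduced one of the shape $D = a v_1 + c v_2$ with $0 \leq c \leq b - 1$, and its entire equivalence class is $\{(a + bk) v_1 + (c - bk) v_2 : k \in \mathbb{Z}\}$. For the upper bound, I take $D = 3 v_1 + 3 v_2$: for any effective $E = e_1 v_1 + e_2 v_2$ of degree $3$, the difference $D - E = (3 - e_1) v_1 + (3 - e_2) v_2$ is already effective, so $\rk(D) \geq 3$ and $\gon_3(B^*_{2,b}) \leq 6$.

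For the lower bound, I would show that any $v_1$-reduced $D = a v_1 + c v_2$ of rank at least $3$ has degree at least $6$. Subtracting $E = 3 v_1$ yields the still $v_1$-reduced divisor $(a - 3) v_1 + c v_2$, which is equivalent to an effective divisor if and only if $a \geq 3$. Subtracting $E = 3 v_2$ yields $a v_1 + (c - 3) v_2$, which is equivalent to an effective divisor if and only if either $c \geq 3$, or else a single firing of $v_2$ clears the debt, which requires $a \geq b$. If $a, c \geq 3$ then $\Deg(D) \geq 6$ already, so the only remaining case is $c \leq 2$ and $a \geq b \geq 4$.

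This residual case is the main obstacle. To handle it I would subtract the mixed effective divisors $E = 2 v_1 + v_2$ and $E = v_1 + 2 v_2$ and run the analogous analysis on the resulting equivalence classes: a single firing of $v_2$ must again repay the debt, forcing $a \geq b + 2$ when $c = 0$, $a \geq b + 1$ when $c = 1$, and $a \geq b$ when $c = 2$. In each subcase $\Deg(D) = a + c \geq b + 2 \geq 6$, using the hypothesis $b \geq 4$ only at the final inequality. Combining the two bounds gives $\gon_3(B^*_{2,b}) = 6$.
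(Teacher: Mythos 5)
Your argument is correct and is essentially the paper's: both proofs come down to choosing an effective degree-$3$ divisor $E$ that puts the vertex carrying at most two chips into debt, and observing that repaying that debt requires firing $b \geq 4$ chips across from the other vertex, which forces $\Deg(D) \geq 6$. One small slip: in the lower-bound analysis you twice say a single ``firing of $v_2$'' clears the debt at $v_2$ --- you mean firing $v_1$ (firing $v_2$ would only deepen the debt), and the inequalities $a \geq b$, $a \geq b+1$, $a \geq b+2$ you derive are indeed exactly the conditions for a single firing of $v_1$, so the mathematics is unaffected.
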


\begin{proof}
As with any graph $\gon_3 (\banana{2}{b}) \leq 3|V(\banana{2}{b})| = 6$.  Assume there is a divisor $D$ with $\Deg(D) < 6$.  By symmetry, we may assume that $D(v_2) \leq 2$.  We proceed by cases. 
\begin{enumerate}
\item  If $D(v_2) = 0$, then $D(v_1) \leq 5$.  Consider the divisor $D - 2 \cdot (v_1) - (v_2)$. This divisor has a debt of $-1$ on $v_2$ but at most $3$ chips on $v_1$, so $D$ cannot be rank at least 3.
\item  If $D(v_2) = 1$, then $D(v_1) \leq 4$.  Consider the divisor $D - (v_1) - 2 \cdot (v_2)$. This divisor has a debt of $-1$ on $v_2$ but at most 3 chips on $v_1$, so $D$ cannot be rank at least 3.
\item  If $D(v_2) = 2$, then $D(v_1) \leq 3$.  Consider the divisor $D - 3 \cdot (v_2)$. This divisor has a debt of $-1$ on $v_2$ but at most $3$ chips on $v_1$, so $D$ cannot be rank at least 3.
\end{enumerate}
We conclude that $\gon_3 (\banana{2}{b}) = 6$.
\end{proof}

\begin{lemma}
\label{Lem:banana3gon}
If $b \geq 2a$, then $\gon_{3}(B^{*}_{a,b}) = 3a$.
\end{lemma}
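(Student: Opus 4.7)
The plan is to pair an explicit upper-bound construction with an inductive lower bound. For the upper bound, I would take the divisor $D = 3(v_1 + v_2 + \cdots + v_a)$, of degree $3a$. For any effective divisor $E$ of degree $3$, we have $0 \leq E(v_i) \leq 3$ at every vertex, so $D - E$ has nonnegative coefficient everywhere and is therefore already effective. Hence $\rk(D) \geq 3$ and $\gon_3(B^*_{a,b}) \leq 3a$.

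For the reverse inequality, I would induct on $a$. The base case $a = 2$ is exactly Lemma~\ref{Lem:Banana3gonBaseCase}, whose hypothesis $b \geq 4$ matches $b \geq 2a$. For the inductive step with $a \geq 3$, let $D$ be a divisor of rank at least $3$ on $B^*_{a,b}$, and replace $D$ by an equivalent $v_{a-1}$-reduced divisor. View $B^*_{a,b}$ as the subgraph $B^*_{a-1,b-1}$ on $\{v_1,\dots,v_{a-1}\}$ joined at $v_{a-1}$ to the single vertex $v_a$ by $b$ parallel edges. I would split on the value of $D(v_a)$. If $D(v_a) \geq 3$, then Lemma~\ref{Lem:GlueReduced} gives $\rk(D\vert_{B^*_{a-1,b-1}}) \geq 3$; since $b - 1 \geq 2(a-1)$, the induction hypothesis applies and $\Deg(D\vert_{B^*_{a-1,b-1}}) \geq 3(a-1)$, whence $\Deg(D) \geq 3(a-1) + D(v_a) \geq 3a$. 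If instead $D(v_a) \leq 2$, then Lemma~\ref{Lem:banana3divisor2} yields $\Deg(D) \geq a + b \geq a + 2a = 3a$.

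The heart of the argument is Lemma~\ref{Lem:banana3divisor2}, which handles the delicate regime where $v_a$ carries few chips and the restriction to $B^*_{a-1,b-1}$ must shoulder nearly the entire degree; this is the step I would most expect to require fresh work if one had to reprove everything from scratch. The remaining bookkeeping simply tracks that the hypothesis $b \geq 2a$ on $B^*_{a,b}$ descends to $b - 1 \geq 2(a-1)$ on $B^*_{a-1,b-1}$, which is exactly what justifies applying the induction hypothesis in the large-$D(v_a)$ branch.
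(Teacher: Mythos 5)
Your proof is correct and follows essentially the same route as the paper's: the trivial upper bound via a divisor with three chips per vertex, then induction on $a$ with base case Lemma~\ref{Lem:Banana3gonBaseCase}, splitting on $D(v_a)$ and invoking Lemma~\ref{Lem:banana3divisor2} when $D(v_a)\leq 2$ and Lemma~\ref{Lem:GlueReduced} plus the inductive hypothesis when $D(v_a)\geq 3$. Your version is in fact slightly more carefully written, since you make explicit both the decomposition $\Deg(D)=\Deg(D\vert_{B^*_{a-1,b-1}})+D(v_a)$ and the check that the hypothesis $b\geq 2a$ descends to the subgraph.
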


\begin{proof}
As with any graph, $\gon_{3}(B^{*}_{a,b}) \leq 3|V(B^*_{a,b,n})| = 3a$.  Now, let $D$ be a divisor $D$ of rank $3$, and assume without loss of generality that $D$ is $v_{a-1}$-reduced.  If $\Deg(D) \geq a+b$, we are done.  If not, by Lemma \ref{Lem:banana3divisor2}, $D(v_a) \geq 3$.  We will proceed by induction on $a$.  The base case is Lemma \ref{Lem:Banana3gonBaseCase}.  Now assume that $\gon_3 (B^{*}_{a-1,b-1}) = 3(a-1)$.  By Lemma~\ref{Lem:GlueReduced}, the restriction of $D$ to $B^*_{a-1,b-1}$ must have rank at least $3$, so there are at least $3(a-1)$ chips on that subgraph.  It follows that $\Deg(D) \geq 3a$.
\end{proof}

\begin{theorem}
\label{Thm:Banana3gon}
If $a \leq b \leq 2a - 1$ then $\gon_{3}(B^{*}_{a,b}) = a + b$.
\end{theorem}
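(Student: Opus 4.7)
The plan is to establish the equality by matching upper and lower bounds. For the upper bound, we exhibit an explicit rank-$3$ divisor of degree $a+b$. For the lower bound, we proceed by induction on $a$, combining the two preceding lemmas with Lemma~\ref{Lem:GlueReduced} to handle the intermediate regime of $D(v_a)$.

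For the upper bound, consider the divisor $D = (b+1)v_a + v_1 + v_2 + \cdots + v_{a-1}$, which has degree $a+b$. To show $\rk(D) \geq 3$, take an arbitrary effective divisor $E$ of degree $3$. In most cases $D - E$ is already effective; the troublesome cases are $E = 3v_j$, $E = 2v_j + v_k$ for distinct $j, k < a$, and $E = v_a + 2v_j$ for $j < a$, each of which leaves $D - E$ with a negative value at $v_j$. In each such case, apply the chip-firing script in which the vertex $v_l$ is fired $\max(0, l - j)$ times. A direct accounting using the edge multiplicity $b - a + l + 1$ between $v_l$ and $v_{l+1}$ shows that this firing subtracts $b$ chips from $v_a$, adds $b - a + j + 1$ chips to $v_j$, adds $1$ chip to each $v_l$ with $j < l < a$, and leaves all other values unchanged. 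Since $b \geq a$ and $j \geq 1$, the resulting divisor is effective, so $D - E$ is equivalent to an effective divisor.

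For the lower bound, we induct on $a$. The base case $a = 2$ is immediate: by Lemmas~\ref{Lem:Increasing} and~\ref{Lem:BStar}, $\gon_3(B^*_{2,b}) \geq \gon_2(B^*_{2,b}) + 1 = b + 2 = a + b$. For the inductive step, let $D$ be a divisor of rank at least $3$ on $B^*_{a,b}$, which we may assume is $v_{a-1}$-reduced. We split based on $D(v_a)$. If $D(v_a) \leq 2$, then Lemma~\ref{Lem:banana3divisor2} gives $\Deg(D) \geq a+b$; if $D(v_a) \geq b+1$, then Lemma~\ref{Lem:Banana3divisor} gives $\Deg(D) \geq a+b$. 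Otherwise $3 \leq D(v_a) \leq b$, and viewing $B^*_{a,b}$ as $B^*_{a-1,b-1}$ connected to the single vertex $v_a$ by $b$ parallel edges, Lemma~\ref{Lem:GlueReduced} shows that $D|_{B^*_{a-1,b-1}}$ has rank at least $3$. For $a \leq b \leq 2a - 2$, the inductive hypothesis yields $\Deg(D|_{B^*_{a-1,b-1}}) \geq (a-1) + (b-1) = a+b-2$, so $\Deg(D) \geq a+b+1$. For $b = 2a - 1$, Lemma~\ref{Lem:banana3gon} applies to $B^*_{a-1,b-1}$ (since $b - 1 = 2(a-1)$), giving $\Deg(D|_{B^*_{a-1,b-1}}) \geq 3(a-1)$, hence $\Deg(D) \geq 3a \geq a + b$.

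The main obstacle is the upper-bound construction: sub-additivity (Lemma~\ref{Lem:Subadditive}) combined with Lemma~\ref{Lem:BStar} only gives $\gon_3 \leq \gon_1 + \gon_2 = a + b + 1$, so one must beat this bound by exactly $1$ via the explicit telescoping firing script described above, which exploits the increasing edge multiplicities along the path $v_1, v_2, \ldots, v_a$ in $B^*_{a,b}$.
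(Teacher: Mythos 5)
Your proof is correct and follows essentially the same route as the paper's: the same divisor $(b+1)\cdot(v_a) + \sum_{i=1}^{a-1}v_i$ for the upper bound, and the same induction on $a$ for the lower bound, peeling off $v_a$ via Lemma~\ref{Lem:GlueReduced} and invoking Lemmas~\ref{Lem:Banana3divisor}, \ref{Lem:banana3divisor2}, and \ref{Lem:banana3gon}. The only differences are cosmetic: you verify the rank of the upper-bound divisor explicitly (the paper merely asserts it), and you get the $a=2$ base case from $\gon_3 \geq \gon_2 + 1$ together with Lemma~\ref{Lem:BStar} rather than from Riemann--Roch.
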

\begin{proof}
First note that the divisor 
\[ 
(b+1) \cdot (v_a) + \sum\limits_{i=1}^{a-1} v_i
\]
has rank at least 3 and degree $a+b$, so $\gon_3 (B^{*}_{a,b}) \leq a + b$.  Now, let $D$ be a divisor of rank at least 3.  If $\Deg (D) \geq a+b$, we are done.  If not, by Lemma~\ref{Lem:banana3divisor2}, we have $D(v_a) \geq 3$.  We proceed by induction on $a$.  For the base cases, we have $\gon_3 (\banana{2}{2}) = 4$ and $\gon_3 (\banana{2}{3}) = 5$, by the Riemann-Roch Theorem for graphs \cite[Theorem~1.12]{BakerNorine09}, and $\gon_3 (\banana{a}{2a}) = 3a$ by Lemma \ref{Lem:banana3gon}.  For the induction step assume that $\gon_3 (B^{*}_{a-1,b-1}) = a + b - 2$.  Since $D(v_a) \geq 3$, therefore, we have $\Deg(D) > a + b$.
\end{proof}

\section{Symmetric Generalized Banana Graphs}
\label{Sec:Sym}

Our goal now is to find a large family of graphs $G$ such that $\gon_3 (G) < 2 \cdot \gon_1 (G)$.  In this section, we consider a family of generalized banana graphs.  Let $B^{0,0}_{a,b,k}$ be the graph obtained from 2 copies of $B^*_{a,b}$ by connecting the two vertices of highest degree with $k$ edges, as in Figure~\ref{fig:SymBanana}.  More precisely, let $v_1 , \ldots , v_a$ be the vertices in $L = B^*_{a,b}$ and let $w_1 , \ldots , w_a$ be the vertices in $R = B^*_{a,b}$.  Then $B^{0,0}_{a,b,k}$ is the graph obtained by connecting $v_a$ to $w_a$ with $k$ edges.

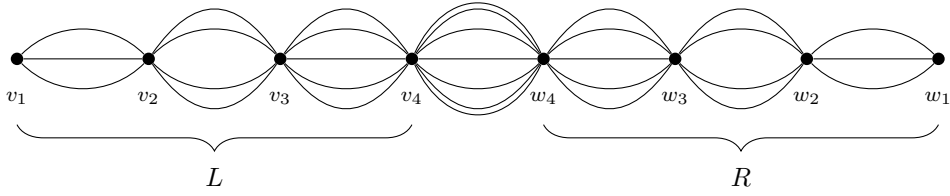
\begin{figure}
    \centering

\begin{tikzpicture}[scale=1.75]
\begin{scope}
\tikzstyle{every node}=[circle,fill,scale=0.5pt]
\node (0) at (0,0) {};
\node (1) at (1,0) {};
\node (2) at (2,0) {};
\node (3) at (3,0) {};
\node (4) at (4,0) {};
\node (5) at (5,0) {};
\node (6) at (6,0) {};
\node (7) at (7,0) {};
\end{scope}

\draw (0,-0.3) node {\footnotesize$v_1$};
\draw (1,-0.3) node {\footnotesize$v_2$};
\draw (2,-0.3) node {\footnotesize$v_3$};
\draw (3,-0.3) node {\footnotesize$v_4$};
\draw (4,-0.3) node {\footnotesize$w_4$};
\draw (5,-0.3) node {\footnotesize$w_3$};
\draw (6,-0.3) node {\footnotesize$w_2$};
\draw (7,-0.3) node {\footnotesize$w_1$};
\draw [decorate, decoration = {brace,amplitude=10pt}] (3,-0.5) --  (0,-0.5);
\draw [decorate, decoration = {brace,amplitude=10pt}] (7,-0.5) --  (4,-0.5);
\draw (1.5,-0.9) node {$L$};
\draw (5.5,-0.9) node {$R$};

\draw (0) -- (1);
\draw (0) to [out=45,in=135,looseness=1] (1);
\draw (0) to [out=315,in=225,looseness=1] (1);

\draw (1) to [out=45,in=135,looseness=1] (2);
\draw (1) to [out=315,in=225,looseness=1] (2);
\draw (1) to [out=55,in=125,looseness=1.5] (2);
\draw (1) to [out=305,in=235,looseness=1.5] (2);

\draw (2) -- (3);
\draw (2) to
[out=45,in=135,looseness=1] (3);
\draw (2) to [out=315,in=225,looseness=1] (3);
\draw (2) to [out=55,in=125,looseness=1.5] (3);
\draw (2) to [out=305,in=235,looseness=1.5] (3);

\draw (3) -- (4);
\draw (3) to
[out=45,in=135,looseness=1] (4);
\draw (3) to [out=315,in=225,looseness=1] (4);
\draw (3) to [out=55,in=125,looseness=1.5] (4);
\draw (3) to [out=305,in=235,looseness=1.5] (4);
\draw (3) to [out=65,in=115,looseness=1.5] (4);
\draw (3) to [out=295,in=245,looseness=1.5] (4);

\draw (4) -- (5);
\draw (4) to [out=45,in=135,looseness=1] (5);
\draw (4) to [out=315,in=225,looseness=1] (5);
\draw (4) to [out=55,in=125,looseness=1.5] (5);
\draw (4) to [out=305,in=235,looseness=1.5] (5);

\draw (5) to [out=45,in=135,looseness=1] (6);
\draw (5) to [out=315,in=225,looseness=1] (6);
\draw (5) to [out=55,in=125,looseness=1.5] (6);
\draw (5) to [out=305,in=235,looseness=1.5] (6);

\draw (6) -- (7);
\draw (6) to [out=45,in=135,looseness=1] (7);
\draw (6) to [out=315,in=225,looseness=1] (7);

\end{tikzpicture}

\caption{The symmetric generalized banana graph $B^{0,0}_{4,5,7}$.}
    \label{fig:SymBanana}
\end{figure}

As we will see in Lemma~\ref{Lem:Sym1} and Corollary~\ref{Cor:Sym2} below, the 1st and 2nd gonality of $B^{0,0}_{a,b,k}$ are both even.  To obtain more gonality sequences, we also consider generalized banana graphs that are ``almost'' symmetric.  Let $B^{0,1}_{a,b,k}$ be the graph obtained by connecting the vertex $v_a$ in $L = B^*_{a,b-1}$ to the vertex $w_a$ in $R = B^*_{a,b}$ by $k$ edges.  Let $B^{1,0}_{a,b,k}$ be the graph obtained by connecting the vertex $v_{a-1}$ in $L = B^*_{a-1,b}$ to the vertex $w_a$ in $R = B^*_{a,b}$ by $k$ edges.  Finally, let $B^{1,1}_{a,b,k}$ be the graph obtained by connecting the vertex $v_{a-1}$ in $L = B^*_{a-1,b-1}$ to the vertex $w_a$ in $R = B^*_{a,b}$ by $k$ edges.

We begin by computing the first gonalities of these graphs.

\begin{lemma}
\label{Lem:Sym1}
If $2 \leq a \leq b \leq 2a-1$ and $k \geq 2a$, then
\begin{align*}
\gon_1 (B^{0,0}_{a,b,k}) = \gon_1 (B^{0,1}_{a,b,k}) &= 2a \\
\gon_1 (B^{1,0}_{a,b,k}) = \gon_1 (B^{1,1}_{a,b,k}) &= 2a-1 .
\end{align*}
\end{lemma}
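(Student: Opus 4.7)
The plan is a direct application of Proposition~\ref{Prop:ClosedUnderSum} with $r = 1$. By construction, each of the four graphs $B^{i,j}_{a,b,k}$ is obtained by joining a left piece $L$ and a right piece $R$, both generalized banana graphs of the form $B^*_{a',b'}$, via $k$ parallel edges at their respective highest-degree vertices. This is exactly the gluing construction governed by Proposition~\ref{Prop:ClosedUnderSum}, and the hypothesis $k \geq 2a$ will comfortably dominate $\gon_1(L) + \gon_1(R)$ in every case, so the task reduces to identifying $L$ and $R$ and summing their 1st gonalities.

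The key computational input is Lemma~\ref{Lem:BStar}: whenever the parameters $(a',b')$ of a piece lie in the window $2 \leq a' \leq b' \leq 2a'-1$, we have $\gon_1(B^*_{a',b'}) = a'$. Cataloguing the four cases, the two pieces of $B^{0,0}_{a,b,k}$ and $B^{0,1}_{a,b,k}$ each have 1st gonality $a$, summing to $2a$, while the left piece of $B^{1,0}_{a,b,k}$ and $B^{1,1}_{a,b,k}$ contributes $a-1$, summing with the right piece's $a$ to $2a-1$. Feeding these values into Proposition~\ref{Prop:ClosedUnderSum} then yields the four stated identities.

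The principal technical wrinkle is that shifting a parameter by $-1$ can push a subpiece outside the validity window of Lemma~\ref{Lem:BStar}: for instance, in $B^{0,1}_{a,a,k}$ the left piece is $B^*_{a, a-1}$, whose parameters violate $a' \leq b'$, and in $B^{1,0}_{a, 2a-1, k}$ the left piece is $B^*_{a-1, 2a-1}$, which violates $b' \leq 2a'-1$. To handle each boundary case, I would either tighten the hypothesis to exclude these endpoints, or supply a short direct computation of $\gon_1$ for the offending $B^*$ via Dhar's burning algorithm, in the spirit of the arguments of Section~\ref{Sec:Bananas}. Once the 1st gonalities of all relevant sub-bananas are pinned down, Proposition~\ref{Prop:ClosedUnderSum} closes the proof uniformly across the four cases.
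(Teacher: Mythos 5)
Your approach is exactly the paper's: the printed proof is the one line ``This follows directly from Lemma~\ref{Lem:BStar} and Proposition~\ref{Prop:ClosedUnderSum}.'' The boundary cases you flag, however, are a real issue that the paper's proof silently skips, and at least one of them is not merely a gap in the citation but an actual failure of the statement as written: for $b=a$ the left piece of $B^{0,1}_{a,a,k}$ is $B^*_{a,a-1}$, whose edge multiplicities are $1,2,\ldots,a-1$, so $v_1$ is joined to $v_2$ by a bridge and $\gon_1(B^*_{a,a-1})<a$ (e.g.\ $B^*_{2,1}$ is a tree of gonality $1$, and $B^*_{3,2}$ has genus $1$ and gonality $2$); Proposition~\ref{Prop:ClosedUnderSum} then forces $\gon_1(B^{0,1}_{a,a,k})<2a$. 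So your proposed fix of tightening the hypothesis (to $b\geq a+1$ in the $B^{0,1}$ case) is the right one, and it is harmless downstream, since Theorem~\ref{Thm:SymBanana} only invokes $B^{0,1}_{a,b,k}$ with $b\geq a+1$. The other out-of-range pieces you identify, such as $B^*_{a-1,b}$ with $b\geq 2(a-1)$, do still have first gonality $a-1$ (the all-ones divisor gives the upper bound, and all edge multiplicities exceed $a-1$), but as you note this needs a short direct argument rather than a citation of Lemma~\ref{Lem:BStar}.
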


\begin{proof}
This follows directly from Lemma~\ref{Lem:BStar} and Proposition~\ref{Prop:ClosedUnderSum}.
\end{proof}

To compute the 2nd gonality of these graphs, we will need the following refinement of Proposition~\ref{Prop:ClosedUnderSum}.

\begin{proposition}
\label{Prop:RefClosedUnderSum}
Let $G_1$ and $G_2$ be graphs, let $v_1 \in V(G_1)$, $v_2 \in V(G_2)$, and let $G$ be the graph obtained by connecting $v_1$ to $v_2$ with $\ell$ edges.  If
\[
\ell \geq \gon_2 (G_1) + \gon_2 (G_2) - \min \{ \gon_1 (G_1) , \gon_1 (G_2) \} ,
\]
then
\[
\gon_2 (G) = \gon_2 (G_1) + \gon_2 (G_2) .
\]
\end{proposition}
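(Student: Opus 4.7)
The upper bound $\gon_2(G) \leq \gon_2(G_1) + \gon_2(G_2)$ follows by superimposing rank-$2$ divisors from the two sides, exactly as in the proof of Proposition~\ref{Prop:ClosedUnderSum}. The content is in the matching lower bound: given any divisor $D$ on $G$ of rank at least $2$, I will show $\Deg(D) \geq \gon_2(G_1) + \gon_2(G_2)$. Without loss of generality, assume $\gon_1(G_1) \leq \gon_1(G_2)$, so the hypothesis reads $\ell \geq \gon_2(G_1) + \gon_2(G_2) - \gon_1(G_1)$. Replace $D$ by its $v_1$-reduced representative; then by Lemma~\ref{Lem:GlueReduced}, $\Deg(D|_{G_1}) \geq \gon_2(G_1)$, and a Dhar's-burning argument (since the fire from $v_1$ on $G$ must reach all of $V(G_2)$ via $v_2$) shows that $D|_{G_2}$ is in turn $v_2$-reduced as a divisor on $G_2$.

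Let $r$ denote the rank of $D|_{G_2}$ on $G_2$. If $r \geq 2$ then $\Deg(D|_{G_2}) \geq \gon_2(G_2)$ and we are done, so assume $r \leq 1$. I choose a degree-$2$ effective divisor $E$ on $G$ whose $G_2$-part defeats $D|_{G_2}$ on $G_2$: when $r=1$, take $E$ entirely on $G_2$; when $r=0$, take $E = w_0 + u$ with $w_0 \in V(G_2)$ any vertex for which $D|_{G_2} - w_0$ is not equivalent to an effective divisor on $G_2$, and $u \in V(G_1)$ arbitrary. Any firing sequence recovering $D - E$ to an effective divisor on $G$ transfers chips across the $\ell$ gluing edges only when a fired set separates $v_1$ from $v_2$, and each such firing moves exactly $\ell$ chips; the net transfer is therefore an integer multiple $\ell t$ of $\ell$, and the choice of $E$ forces $t \neq 0$.

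In the branch $t \geq 1$, the $G_1$-component of the recovered divisor is equivalent on $G_1$ to $D|_{G_1} - E|_{V(G_1)} - \ell t \cdot v_1$; since $D|_{G_1}$ is $v_1$-reduced, this becomes a rank condition on $D|_{G_1} - \ell t \cdot v_1$. When $r = 1$ the condition reduces to $D(v_1) \geq \ell$, hence $\Deg(D|_{G_1}) \geq \ell$, and combined with $\Deg(D|_{G_2}) \geq \gon_1(G_2) \geq \gon_1(G_1)$ yields $\Deg(D) \geq \ell + \gon_1(G_1) \geq \gon_2(G_1) + \gon_2(G_2)$. When $r = 0$, the condition, applied uniformly as $u$ ranges over $V(G_1)$, says that $D|_{G_1} - \ell \cdot v_1$ has rank at least $1$ on $G_1$, so $\Deg(D|_{G_1}) \geq \ell + \gon_1(G_1)$ directly. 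The symmetric branch $t \leq -1$ pushes chips the other way and yields an even larger lower bound on $\Deg(D|_{G_2})$. The main obstacle will be the uniformity in the $r = 0$ case: the integer $t$ witnessing recovery a priori depends on $u$, and I must argue that these witnesses can be unified under a common minimum $t^* \geq 1$, using that adding a non-negative multiple of $v_1$ to a divisor equivalent to an effective divisor keeps it so. This uniform local rank condition on $G_1$ is what upgrades the hypothesis of Proposition~\ref{Prop:ClosedUnderSum} by the $\min_i \gon_1(G_i)$ correction appearing in the statement.
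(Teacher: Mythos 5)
Your proposal is correct and follows essentially the same route as the paper's proof: reduce $D$ at $v_1$, apply Lemma~\ref{Lem:GlueReduced} to get $\Deg(D\vert_{G_1}) \geq \gon_2(G_1)$, then split into cases according to how much rank $D\vert_{G_2}$ retains on its own, extracting an extra $\ell$ (or $\ell + \gon_1(G_1)$, via an $E$ with one point on each side) when it falls short. Your bookkeeping of the net chip transfer as an integer multiple of $\ell$, and the uniformity point in the rank-zero case, simply make explicit what the paper compresses into ``we must be able to pass chips across the $\ell$ edges.''
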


\begin{proof}
Let $D_1$ be a divisor of rank 2 and degree $\gon_2 (G_1)$ on $G_1$, and let $D_2$ be a divisor of rank 2 and degree $\gon_2 (G_2)$ on $G_2$.  Then the divisor $D_1 + D_2$ has rank at least 2 on $G$, so $\gon_2 (G) \leq \gon_2 (G_1) + \gon_2 (G_2)$.

For the reverse inequality, let $D$ be a divisor of rank at least 2 on $G$.  We must show that $\Deg (D) \geq \gon_2 (G_1) + \gon_2 (G_2)$.  Without loss of generality, assume that $D$ is $v_1$-reduced.  Since $D$ has rank at least 2, $\Deg (D \vert_{G_1}) \geq \gon_2 (G_1)$ by Lemma~\ref{Lem:GlueReduced}.  We proceed by cases.  First, if $\Deg (D \vert_{G_2}) \geq \gon_2 (G_2),$ then $\Deg(D) \geq \gon_2 (G_1) + \gon_2 (G_2)$.
 
Second, if $\gon_1 (G_2) \leq \Deg (D \vert_{G_2}) < \gon_2 (G_2)$, then $(D \vert_{G_2})$ does not have rank at least 2, so we must be able to pass chip across the $\ell$ edges.  Thus, $\Deg(D) \geq \ell + \gon_1 (G_2) \geq \gon_2 (G_1) + \gon_2 (G_2)$.
 
Finally, if $\Deg (D \vert_{G_2}) < \gon_1 (G_2)$, then $D \vert_{G_2}$ does not have positive rank.  Again, we must be able to pass chips across the $\ell$ edges.  Let $D'$ be the divisor obtained by firing the subset of vertices $V(G_1)$.  If $E$ is the sum of a vertex of $G_1$ and a vertex of $G_2$, then $D-E$ is equivalent to an effective divisor.  If follows that $D' \vert_{G_1}$ must have positive rank.  Thus, $\Deg (D' \vert_{G_1}) \geq \gon_1 (G_1)$, so $\Deg (D') \geq \ell + \gon_1 (G_1) \geq \gon_2 (G_1) + \gon_2 (G_2)$.
\end{proof}

\begin{corollary}
\label{Cor:Sym2}
If $2 \leq a \leq b \leq 2a-1$ and $k \geq 2b-a+3$, then
\begin{align*}
\gon_2 (B^{0,0}_{a,b,k}) = \gon_2 (B^{1,0}_{a,b,k}) &= 2b+2 \\
\gon_2 (B^{0,1}_{a,b,k}) = \gon_2 (B^{1,1}_{a,b,k}) &= 2b+1 . 
\end{align*}
\end{corollary}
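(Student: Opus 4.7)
The plan is to apply Proposition~\ref{Prop:RefClosedUnderSum} to each of the four decompositions of $B^{i,j}_{a,b,k}$ into two $B^*$-graphs joined by $k$ parallel edges, as $(i,j)$ ranges over $\{0,1\}^2$.

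First, I would read off the first two gonalities of the two summands in each case from Lemma~\ref{Lem:BStar}. In all four cases $R = B^*_{a,b}$, giving $\gon_1(R) = a$ and $\gon_2(R) = b+1$. The four choices of $L$ are $B^*_{a,b}$, $B^*_{a,b-1}$, $B^*_{a-1,b}$, and $B^*_{a-1,b-1}$, with corresponding pairs $(\gon_1(L), \gon_2(L))$ equal to $(a, b+1)$, $(a, b)$, $(a-1, b+1)$, and $(a-1, b)$ for $(i,j) = (0,0), (0,1), (1,0), (1,1)$ respectively. Summing the two second gonalities yields the claimed totals of $2b+2$ in the $(0,0)$ and $(1,0)$ cases and $2b+1$ in the $(0,1)$ and $(1,1)$ cases.

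Next, I would verify the hypothesis of Proposition~\ref{Prop:RefClosedUnderSum}, namely
\[
k \geq \gon_2(L) + \gon_2(R) - \min\{\gon_1(L), \gon_1(R)\}.
\]
The largest of the four resulting thresholds occurs in the $(1,0)$ case, where $\min\{\gon_1(L), \gon_1(R)\} = a-1$ and $\gon_2(L) + \gon_2(R) = 2b+2$, so the inequality reads $k \geq 2b - a + 3$. This is exactly the assumption of the corollary, and the thresholds in the other three cases are strictly smaller and so are automatically satisfied under the same hypothesis.

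With the inequality verified, Proposition~\ref{Prop:RefClosedUnderSum} immediately delivers $\gon_2(B^{i,j}_{a,b,k}) = \gon_2(L) + \gon_2(R)$ in each of the four cases, recovering the asserted formulas. The argument is essentially formal, and I do not anticipate a genuine obstacle: the main point is the numerical observation that the single condition $k \geq 2b - a + 3$ dominates all four instances of the Proposition's hypothesis, with the $(1,0)$ case being extremal. The only routine side check is confirming that Lemma~\ref{Lem:BStar} is applicable to the parameters of $L$ appearing in the $(1,\cdot)$ cases under the standing hypothesis $2 \leq a \leq b \leq 2a-1$.
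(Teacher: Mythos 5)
Your proposal is correct and is precisely the paper's argument: the paper's proof of this corollary is the single line that it ``follows directly from Lemma~\ref{Lem:BStar} and Proposition~\ref{Prop:RefClosedUnderSum},'' and you have filled in exactly the computation it leaves implicit, correctly identifying the $(1,0)$ case as the extremal one forcing the threshold $k \geq 2b-a+3$. The ``routine side check'' you defer --- that Lemma~\ref{Lem:BStar} actually applies to the $L$-parameters at boundary values such as $b=a$ or $b=2a-1$ --- is likewise left unaddressed by the paper, so it is not a point on which you diverge from its proof.
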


\begin{proof}
This follows directly from Lemma~\ref{Lem:BStar} and Proposition~\ref{Prop:RefClosedUnderSum}.
\end{proof}

We now compute the 3rd gonalities of these graphs.

\begin{theorem}
\label{Thm:SymBanana3Gon}
Let $2 \leq a \leq b \leq 2a-1$ and let $2b \leq k$.  We have the following:
\begin{enumerate}
\item if $k \leq 2a + b-1$, then $\gon_3 (B^{0,0}_{a,b,k}) = k+b+1$,
\item if $k \leq 2a + b-1$, then $\gon_3 (B^{0,1}_{a,b,k}) = k + b$,
\item if $k \leq 2a + b -2$, then $\gon_3 (B^{1,0}_{a,b,k}) = k + b + 1$, and
\item if $k \leq 2a + b - 2$, then $\gon_3 (B^{1,1}_{a,b,k}) = k + b$.
\end{enumerate}
\end{theorem}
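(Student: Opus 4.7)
The plan is to establish each of the four cases of the theorem with matching upper and lower bounds, proceeding by a combination of explicit divisor construction and case analysis. The four cases differ only in the left banana ($B^*_{a-i,b-j}$ for $(i,j) \in \{0,1\}^2$), so I describe case~(1) in detail; the other cases require minor adjustments.

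For the upper bound in case~(1), I would construct the divisor
\[
D = b \cdot v_a + (k+1) \cdot w_a
\]
of degree $k+b+1$, and verify $\rk(D) \geq 3$ by chip-firing. The central tool is the \emph{left cascade}: successively firing the sets $V(G) \setminus \{v_1, \ldots, v_{j-1}\}$ for $j = 2, 3, \ldots, a$. Since the edge $v_{j-1} v_j$ has multiplicity $b-a+j$, each firing transfers $b-a+j$ chips from $v_j$ to $v_{j-1}$ while leaving $R$ untouched; the cumulative effect is to shift $b$ chips off $v_a$, depositing $b-a+2$ on $v_1$ and $1$ on each intermediate $v_i$. There is an analogous right cascade on $R$, and firing the subset $L$ alone transfers $k$ chips across the middle from $v_a$ to $w_a$. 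For a general effective $E = E_L + E_R$ of degree $3$, one combines these moves (possibly iterating cascades when $b = a$) to render $D - E$ equivalent to an effective divisor. In cases~(2)--(4), I would use analogous divisors adjusted for the different left banana: for example, $(b-1) \cdot v_a + (k+1) \cdot w_a$ of degree $k+b$ in case~(2), and $b \cdot v_{a-1} + (k+1) \cdot w_a$ in case~(3).

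For the lower bound in case~(1), I would proceed by induction on $a$, with base case $a = 2$ handled directly (analogously to Lemma~\ref{Lem:Banana3gonBaseCase}). For the inductive step, let $D$ be a rank-3 divisor; without loss of generality, assume $D$ is $v_1$-reduced. By Lemma~\ref{Lem:GlueReduced}, $D\vert_L$ has rank at least $3$ on $L = B^*_{a,b}$, hence $\Deg(D\vert_L) \geq a+b$ by Theorem~\ref{Thm:Banana3gon}. To bound $\Deg(D\vert_R)$, I would case-split on $D(w_a)$: either $D(w_a) \geq k-a+1$ contributes sufficiently directly, or else chips must flow across the $k$-edge cut from $L$ to $R$ during the $w_1$-reduction of $D$. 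A careful Dhar analysis on the $w_1$-reduced equivalent, combined with Lemma~\ref{Lem:BStar} applied to $R$, forces $\Deg(D\vert_R) \geq k-a+1$. Summing gives $\Deg(D) \geq (a+b) + (k-a+1) = k+b+1$. The restriction $k \leq 2a+b-1$ is what makes this bound tight (rather than $2(a+b)$); beyond that range, Proposition~\ref{Prop:ClosedUnderSum} applied with the two copies of $B^*_{a,b}$ already suffices.

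The main obstacle I expect is the lower-bound case analysis in the middle regime, when $D(v_a)$ and $D(w_a)$ both fall just below the thresholds set by the $v_1$- and $w_1$-reduced conditions on the chain. Here one must carefully compare the two reduced forms of $D$ and track the chip flow across the $k$-edge cut, using the bounds of Lemma~\ref{Lem:BStar}, Lemma~\ref{Lem:Banana3divisor}, Lemma~\ref{Lem:banana3divisor2}, and Theorem~\ref{Thm:Banana3gon} on both the left and right bananas to rule out pathological chip distributions.
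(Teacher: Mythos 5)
Your upper bound is fine: $b\cdot v_a+(k+1)\cdot w_a$ has the right degree $k+b+1$ and is a close cousin of the divisor the paper actually uses, namely $k\cdot(v_a)+(b+1)\cdot(w_a)$; the cascade description is at the level of detail the paper itself gives. The problem is in your lower bound.

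The gap is the additive split $\Deg(D)\geq (a+b)+(k-a+1)$. You claim that a single reduced representative of a rank-$3$ class satisfies both $\Deg(D\vert_L)\geq a+b$ and $\Deg(D\vert_R)\geq k-a+1$, and then you sum. This is false, and the extremal divisor itself is a counterexample: $D=k\cdot(v_a)+(b+1)\cdot(w_a)$ is $v_a$-reduced (run Dhar from $v_a$: all of $L$ burns, and $w_a$ burns because $k\geq 2b>b+1$), has rank at least $3$, and has $\Deg(D\vert_R)=b+1$, which is strictly less than $k-a+1$ whenever $k>a+b$ --- a nonempty part of the allowed range $2b\leq k\leq 2a+b-1$ (take $a=b=2$, $k=5$, where $\Deg(D\vert_R)=3<4=k-a+1$). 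The underlying issue is that your two bounds come from reducing at vertices on opposite sides of the $k$-edge cut ($v_1$- or $v_a$-reduction for the $L$ bound, $w_1$-reduction for the $R$ bound), and degree bounds extracted from two different representatives of the same class cannot be added, since $\Deg(D\vert_L)+\Deg(D\vert_R)=\Deg(D)$ only for a single representative. The paper instead fixes one $v_a$-reduced representative, gets $\Deg(D\vert_L)\geq a+b$ from Lemma~\ref{Lem:GlueReduced} and Theorem~\ref{Thm:Banana3gon}, and then runs a four-way case analysis on $\Deg(D\vert_R)$ in which the two sides \emph{trade off}: if $\Deg(D\vert_R)<a+b$ then $D\vert_R$ has rank at most $2$ and chips must cross the cut, forcing $\Deg(D\vert_L)\geq k$; if $\Deg(D\vert_R)<b+1$ (resp.\ $<a$) then after firing $V(L)$ the restriction to $L$ must still have rank $1$ (resp.\ $2$), giving $\Deg(D)\geq 2a+k$ (resp.\ $\geq (b+1)+k$). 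Each case separately reaches $k+b+1$ using $b\leq 2a-1$ and $k\leq 2a+b-1$, but no fixed split of $k+b+1$ between $L$ and $R$ works uniformly. Your proof needs to be restructured along these lines; no induction on $a$ is needed once Theorem~\ref{Thm:Banana3gon} is in hand.

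A smaller point: Lemma~\ref{Lem:GlueReduced} requires $D$ to be reduced at the \emph{gluing} vertex of $L$, which in $B^{0,0}_{a,b,k}$ is $v_a$, not $v_1$. Reducing at $v_1$ does not obviously give $\rk(D\vert_L)\geq\rk(D)$, since the proof of that lemma uses that the gluing vertex lies in every fired set to prevent chips from crossing the cut.
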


\begin{proof}
We prove this in the case of $B^{0,0}_{a,b,k}$.  The other graphs are similar.  First note that the divisor $k \cdot (v_a) + (b+1) \cdot (w_a)$ has rank at least 3.  This shows that $\gon_3 (B^{0,0}_{a,b,k}) \leq b + k + 1$.  For the reverse inequality, let $D$ be a divisor of rank at least $3$ on $B^{0,0}_{a,b,k}$, and assume that $D$ is $v_a$-reduced.  By Lemma~\ref{Lem:GlueReduced} and Theorem~\ref{Thm:Banana3gon}, we have $\Deg({D \vert}_L) \geq a + b$.  We proceed by cases.

First, if $\Deg(D \vert_R) \geq a + b$, then $\Deg(D) \geq 2a + 2b \geq k + b + 1$.

Next, if $b+1 \leq \Deg(D \vert_R) < a +b$, then $D \vert_R$ has rank at most 2, so we must be able to pass chips across the $k$ edges.  It follows that $\Deg (D \vert_L) \geq k$, so $\Deg (D) \geq k+b+1$.

Third, if $a \leq \Deg(D \vert_R) < b +1,$ then $D \vert_R$ has rank at most 1, so we must able to pass chips across the $k$ edges.  Let $D'$ be the divisor obtained by firing the subset of vertices $V(L)$.  If $E$ is an effective divisor with $\Deg (E \vert_L) = 1$ and $\Deg (E \vert_R) = 2$, then $D-E$ is equivalent to an effective divisor.  It follows that $D' \vert_L$ must have rank at least 1.  Thus, $\Deg(D) \geq 2a + k \geq k + b + 1$.

Finally, if $\Deg(D \vert_R) < a$, then $D \vert_R$ does not have positive rank, so we must be able to pass chips across the $k$ edges.  Again, let $D'$ be the divisor obtained by firing the subset of vertices $V(L)$.  If $E$ is an effective divisor with $\Deg (E \vert_L) = 2$ and $\Deg (E \vert_R) = 1$, then $D-E$ is equivalent to an effective divisor.  It follows that $D' \vert_L$ must have rank at least 2.  Thus, $\Deg(D) \geq k + b + 1$.
\end{proof}

We can use these graphs to identify a large collection of sequences in $\mathcal{G}_3$.

\begin{theorem}
\label{Thm:SymBanana}
If $(x,y,z) \in \mathcal{C}_3$ satisfies $y \geq x+2$, $\frac{3}{2}y \leq z+2 \leq x+y$, then $(x,y,z) \in \mathcal{G}_3$.
\end{theorem}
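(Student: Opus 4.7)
The plan is to realize each admissible triple $(x, y, z)$ as the first three gonalities of a specific graph $B^{\epsilon_1, \epsilon_2}_{a, b, k}$ from the family introduced in this section, choosing the superscripts to match the parities of $x$ and $y$. Setting $\epsilon_1 = 0$ if $x$ is even and $\epsilon_1 = 1$ if $x$ is odd, and defining $\epsilon_2$ analogously from the parity of $y$, I will take
\[
a = \frac{x + \epsilon_1}{2}, \qquad b = \frac{y - 2 + \epsilon_2}{2}, \qquad k = z - b - 1 + \epsilon_2,
\]
so that Lemma~\ref{Lem:Sym1}, Corollary~\ref{Cor:Sym2}, and Theorem~\ref{Thm:SymBanana3Gon} together force the gonality sequence of $B^{\epsilon_1, \epsilon_2}_{a, b, k}$ to begin with $(2a - \epsilon_1,\, 2b + 2 - \epsilon_2,\, k + b + 1 - \epsilon_2) = (x, y, z)$, provided the hypotheses of those results on $a, b, k$ are satisfied.

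The bulk of the argument is then algebraic bookkeeping, checking a handful of inequalities in each of the four parity configurations. The requirement $a \geq 2$ reduces to $x \geq 3$, and the case $x = 2$ is vacuous since $z + 2 \leq x + y = y + 2$ would force $z \leq y$, contradicting $(x, y, z) \in \mathcal{C}_3$. Next, $a \leq b$ reduces to $y \geq x + 2 + \epsilon_1 - \epsilon_2$; the only delicate instance is $(\epsilon_1, \epsilon_2) = (1, 0)$, where $x$ is odd and $y$ is even, forcing $y - x$ to be odd and hence at least $3$. The bound $b \leq 2a - 1$ reduces to $y \leq 2x + 2\epsilon_1 - \epsilon_2$, which follows from the $\mathcal{C}_3$ inequality $y \leq 2x$. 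The lower bound $2b \leq k$ becomes $z \geq (3y - 4 + \epsilon_2)/2$, and this matches $\tfrac{3}{2} y \leq z + 2$ after using the integrality of $z$ in the case $y$ odd. Finally, $k \leq 2a + b - 1 - \epsilon_1$ unwinds to exactly $z + 2 \leq x + y$.

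The main obstacle will be the hypothesis of Corollary~\ref{Cor:Sym2} needed to pin down $\gon_2$, which demands $k \geq 2b - a + 3$ and is therefore implied by the weaker $k \geq 2b$ only when $a \geq 3$. The case $a = 2$ arises only when $x \in \{3, 4\}$, leaving a short finite list of admissible triples to dispose of. I expect to handle these either by sharpening Corollary~\ref{Cor:Sym2} in the regime $a = 2$ (perhaps refining the Proposition~\ref{Prop:RefClosedUnderSum} argument to exploit the specific small graphs involved), or by a direct Riemann-Roch computation on the low-genus graphs $B^{\epsilon_1, \epsilon_2}_{2, b, k}$ that arise.
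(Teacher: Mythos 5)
Your construction is exactly the paper's: the paper likewise splits into the four parity classes and takes $a=\tfrac{1}{2}x$ or $\tfrac{1}{2}(x+1)$, $b=\tfrac{1}{2}(y-2)$ or $\tfrac{1}{2}(y-1)$, and $k=z-\tfrac{1}{2}y$ or $z-\tfrac{1}{2}(y-1)$, which agree with your $\epsilon$-formulas in every case, and your inequality bookkeeping (including the parity step forcing $y\geq x+3$ when $x$ is odd and $y$ is even) matches the checks the paper records. The one place you go beyond the paper is the observation that $k\geq 2b$ only yields the hypothesis $k\geq 2b-a+3$ of Corollary~\ref{Cor:Sym2} when $a\geq 3$; the paper silently ignores this, so your flag is warranted.

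On the five exceptional triples with $a=2$ and $k=2b$, namely $(3,5,6)$, $(3,6,7)$, $(4,6,7)$, $(4,7,9)$, $(4,8,10)$: four of them are salvaged exactly as you suggest, by running Proposition~\ref{Prop:RefClosedUnderSum} with the bound it actually requires for the specific pair of halves (e.g.\ $\ell\geq 2b+2-a$ for $B^{0,0}$, $\ell\geq 2b+1-a$ for $B^{0,1}$), which $k=2b$ satisfies when $a=2$. But be warned that your first proposed fix cannot work for $(3,6,7)$: there the graph is $B^{1,0}_{2,2,4}$, whose left half $L=B^*_{1,2}$ is a single vertex, so placing a degree-$2$ divisor on $L$ and a degree-$(b+1)=3$ rank-$2$ divisor on $R=B^*_{2,2}$ shows $\gon_2(B^{1,0}_{2,2,4})\leq 5<2b+2=6$; the conclusion of Corollary~\ref{Cor:Sym2} is simply false in that degenerate case, not merely unproved. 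You need a different graph for $(3,6,7)$ --- the complete bipartite graph $K_{3,4}$ does it via Lemma~\ref{Lem:CompleteBipartite} (or cite Theorem~\ref{Thm:G3zBig}, since $z=7\geq 2x$ and $y=2x$). With that substitution your argument closes.
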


\begin{proof}
If $x$ and $y$ are both even, consider the graph $B^{0,0}_{a,b,k}$ with $a = \frac{1}{2}x$, $b=\frac{1}{2}(y-2)$, and $k= z - \frac{1}{2}y$.  Since $y \geq x+2$, we have $a \leq b$.  Since $z \leq x+y-2$, we have $k \leq 2a+b-1$, and since $z \geq \frac{3}{2}y-2$, we have $2b \leq k$.  By Theorem~\ref{Thm:SymBanana3Gon}, the first 3 terms of the gonality sequence of $B^{0,0}_{a,b,k}$ are
$(2a,2b+2,k+b+1)=(x,y,z)$.

Similarly, if $x$ is even and $y$ is odd, consider the graph $B^{0,1}_{a,b,k}$ with $a = \frac{1}{2}x$, $b=\frac{1}{2}(y-1)$, and $k=z-\frac{1}{2}(y-1)$.  If $x$ is odd and $y$ is even, consider the graph $B^{1,0}_{a,b,k}$ with $a=\frac{1}{2}(x+1)$, $b=\frac{1}{2}(y-2)$, and $k=z-\frac{1}{2}y$.  Finally, if $x$ and $y$ are both odd, consider the graph $B^{1,1}_{a,b,k}$ with $a=\frac{1}{2}(x+1)$, $b=\frac{1}{2}(y-1)$, and $k=z-\frac{1}{2}(y-1)$.
\end{proof}

\begin{corollary}
\label{Cor:SymBPlusRook}
If $2a+2 \leq b \leq 3a-1$, $b \neq 2a+3$, then $(2a,b,3a+1) \in \mathcal{G}_3$.
\end{corollary}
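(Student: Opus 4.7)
The plan is to realize $(2a, b, 3a+1)$ as an element of $\mathcal{G}_3$ by combining a triple obtained from Theorem~\ref{Thm:SymBanana} with the gonality sequence of a rook graph $K_3 \square K_m$ coming from Lemma~\ref{Lem:Rook}, and invoking the semigroup property of Theorem~\ref{Thm:Closed}. This explains the name of the corollary. The decomposition is forced by matching leading terms: if one summand is the rook-graph triple $(2m, 3m-1, 3m)$, then the complementary summand must have the form $(2c, 2c+2, 3c+1)$ with $c + m = a$, and such triples are precisely the borderline case of Theorem~\ref{Thm:SymBanana}.

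I would split into two cases. When $b = 2a+2$, I would apply Theorem~\ref{Thm:SymBanana} directly to $(2a, 2a+2, 3a+1)$: the conditions $y \geq x+2$ and $\tfrac{3}{2}y \leq z+2$ hold with equality, and $z+2 \leq x+y$ reduces to $a \geq 1$. When $2a+4 \leq b \leq 3a-1$, I would set $c := 3a - b + 1$ and $m := b - 2a - 1$, so that $c + m = a$, $c \geq 2$, and $m \geq 3$. Theorem~\ref{Thm:SymBanana} then places $(2c, 2c+2, 3c+1)$ in $\mathcal{G}_3$ (its hypotheses again hold, two with equality), Lemma~\ref{Lem:Rook} places $(2m, 3m-1, 3m)$ in $\mathcal{G}_3$, and Theorem~\ref{Thm:Closed} yields
\[
(2c, 2c+2, 3c+1) + (2m, 3m-1, 3m) = (2a,\, b,\, 3a+1) \in \mathcal{G}_3.
\]

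The excluded value $b = 2a+3$ corresponds to $m = 2 < 3 = n$, so the rook graph $K_3 \square K_m$ falls outside the range $2 \leq n \leq m$ required by Lemma~\ref{Lem:Rook}, and this specific decomposition breaks down; presumably this is why the statement omits that value. There is no substantive technical obstacle beyond this boundary issue: the only conceptual step is recognizing that $n = 3$ is the correct rook parameter to accommodate the asymptotic $z/x \approx 3/2$ dictated by the corollary, after which the proof reduces to routine substitution and verification of inequalities.
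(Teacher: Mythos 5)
Your proposal is correct and follows essentially the same route as the paper: the base case $b=2a+2$ via Theorem~\ref{Thm:SymBanana}, and for $2a+4 \leq b \leq 3a-1$ the decomposition $(2c,2c+2,3c+1)+(2m,3m-1,3m)$ with $c=3a-b+1$ and $m=b-2a-1$ (the paper writes $n$ for your $c$), combined by Theorem~\ref{Thm:Closed}. Your remark explaining the exclusion of $b=2a+3$ via the failure of $m\geq 3$ matches the reason implicit in the paper's hypotheses.
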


\begin{proof}
If $b = 2a+2$, then $(2a,2a+2,3a+1) \in \mathcal{G}_3$ by Theorem~\ref{Thm:SymBanana}.  If $2a+4 \leq b$ and $m=b-2a-1$, then $m \geq 3$.  By Lemma~\ref{Lem:Rook}, the first 3 terms of the gonality sequence of the rook graph $K_3 \square K_m$ are $(2m, 3m-1, 3m) \in \mathcal{G}_3$.  If $n = 3a-b+1$, then $n \geq 2$.  By Theorem~\ref{Thm:SymBanana}, we have $(2n, 2n+2, 3n+1) \in \mathcal{G}_3$.  Thus, by Theorem~\ref{Thm:Closed}, we have
\begin{align*}
(2m, 3m-1, 3m) + (2n, 2n+2, 3n+1) &= (2(n+m), 3m+2n+1, 3(m+n)+1) \\
&= (2a, b, 3a+1) \in \mathcal{G}_3 .
\end{align*}
\end{proof}

We now prove Theorem~\ref{Thm:G3zLess}.

\begin{proof}[Proof of Theorem~\ref{Thm:G3zLess}]
If $z \geq 2x$, then this follows from Theorem~\ref{Thm:G3zBig}.  For the remainder of the proof, we therefore assume that $z < 2x$.

Next, consider the case where $y=x+2$.  By Theorem~\ref{Thm:SymBanana}, if $\frac{3}{2}x +1 \leq z \leq 2x$, then $(x,x+2,z) \in \mathcal{G}_3$.  For the remainder of the proof, we assume that $y \geq x+3$.

Next, consider the cases where $z \geq 2x-2$.  If $z = 2x-1$, then since $\frac{3}{2}x+2 \leq z$, we have $x \geq 6$, and if $z = 2x-2$, then then since $\frac{3}{2}x+2 \leq z$, we have $x \geq 8$.  For $x \leq 7$, the possibilities are: $(x,y,z) = (6,8,11), (6,9,11), (7,9,13), (7,10,13), (7,11,13)$.  All of these except for $(7,11,13)$ are in $\mathcal{G}_3$ by Theorem~\ref{Thm:SymBanana}.  To see that $(7,11,13) \in \mathcal{G}_3$, note that $(3,5,6) \in \mathcal{G}_3$ by Theorem~\ref{Thm:G3zBig}, and $(4,6,7) \in \mathcal{G}_3$ by the third graph in the right column of \cite[Table~4.1]{ADMYY}.  By Theorem~\ref{Thm:Closed}, $(3,5,6) + (4,6,7) = (7,11,13) \in \mathcal{G}_3$.  For $8 \leq x \leq y-3$, by Theorem~\ref{Thm:G3zBig}, we have $(x-6, y-8, 2x-11), (x-6, y-8, 2x-10) \in \mathcal{G}_3$, and by Lemma~\ref{Lem:Rook}, we have $(6,8,9) \in \mathcal{G}_3$.  Thus, by Theorem~\ref{Thm:Closed}, $(x,y, 2x-2), (x,y,2x-1) \in \mathcal{G}_3$ as well.  For the remainder of the proof, we assume that $z < 2x-2$.

We now consider the cases where $3x \leq y+z$.  Let $a = 2z-3x$, $b=2x-z$, and $c=y+3z-6x+1$.  Since $z \geq \frac{3}{2}x+2$, we have $a \geq 2$.  Since $y \leq z-2$, we have $c \leq 2a-1$, and since $3x \leq y+z$, we have $c \geq a+1$.  It follows from Theorem~\ref{Thm:G3zBig} that $(a, c, 2a) \in \mathcal{G}_3$.  Similarly, since $z \leq 2x-3$, we have $b \geq 3$.   By Lemma~\ref{Lem:Rook}, the first 3 terms of the gonality sequence of the rook graph $K_3 \square K_b$ are $(2b, 3b-1, 3b) \in \mathcal{G}_3$.  Thus, by Theorem~\ref{Thm:Closed}, we have
\begin{align*}
(a, c, 2a) + (2b, 3b-1, 3b) &= (a+2b, 3b+c-1, 2a+3b) \\
&= (x, y, z) \in \mathcal{G}_3 .
\end{align*}

We now consider the remaining cases.  Since $y \geq x+3$ and $3x \geq y+z+1$, we see that $z \leq 2x-4$.  Similarly, since $z \geq \frac{3}{2}x + 2$, we have $y \leq 3x-z-1 \leq \frac{3}{2}x - 3 \leq z-5$.  If $a = 2z-3x-2$, then $a \geq 2$, so by Theorem~\ref{Thm:G3zBig}, we have $(a,c,2a) \in \mathcal{G}_3$ for all $c$ in the range $a+1 \leq c \leq 2a-1$.  If $b = 2x-z+1$, then since $z \leq 2x-4$, we have $b \geq 5$.  Thus, by Corollary~\ref{Cor:SymBPlusRook}, $(2b,d,3b+1) \in \mathcal{G}_3$ for all $d$ in the range $2b+2 \leq d \leq 3b-1$, $d \neq 2b+3$.  If $a>2$, we can choose $c$ and $d$ so that $c+d$ can take any integer value in the range
\[
x+3 = (a+1) + (2b+2) \leq c+d \leq (2a-1) + (3b-1) = z-3.
\]
If $a=2$, then $c$ must be 3, and we cannot choose $d$ so that $c+d = 2b+3$.  However, in this case we have $y=x+4$, and the sequence $(x,x+4,z)$ is in $\mathcal{G}_3$ by Theorem~\ref{Thm:G3zBig}.  Otherwise, since $x+3 \leq y \leq z-5$, we may choose $c$ and $d$ so that $c+d = y$.  Thus, by Theorem~\ref{Thm:Closed}, we have
\begin{align*}
(a, c, 2a) + (2b, d, 3b+1) &= (a+2b, c+d, 2a+3b+1) \\
&= (x, y, z) \in \mathcal{G}_3 .
\end{align*}
\end{proof}

\section{Gonality Sequences of Algebraic Curves}
\label{Sec:Curves}

By Theorem~\ref{Thm:G2}, the semigroup $\mathcal{G}_r$ is not finitely generated for any $r \geq 2$.  Indeed, if $\vec{x} \in \mathcal{G}_r$ and $x_{i+1} = x_i + 1$ for some $i$, then $\vec{x}$ is irreducible.  As we have seen in Theorem~\ref{Thm:Closed}, if $\vec{x} \in \mathcal{G}_r$ is reducible, then there exists graphs of arbitrarily large genus with gonality sequence $\vec{x}$.  Irreducible elements of $\mathcal{G}_r$ are more mysterious.  In this final section, we study the gonality sequences of algebraic curves $C$ such that $\gon_r (C) = \gon_{r-1} (C) + 1$ for some $r$.  These curves have interesting properties, and we ask whether graphs with the same gonality sequence exhibt the same properties.

\begin{lemma}
\label{Lem:EmbeddedCurve}
Let $C$ be a smooth curve and let $r$ be a positive integer.  If $\gon_r (C) = \gon_{r-1} (C) + 1$, then $C$ is isomorphic to a smooth curve of degree $\gon_r (C)$ in $\PP^r$.
\end{lemma}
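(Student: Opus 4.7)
The plan is to use the line bundle realizing $\gon_r(C)$ to produce the desired embedding. By definition of $\gon_r(C)$, there exists a line bundle $L$ on $C$ with $\deg(L) = \gon_r(C)$ and $h^0(L) \geq r+1$, since on a smooth curve $\rk(L) = h^0(L) - 1$. First I would pin down $h^0(L)$ exactly: the curve analogue of Lemma~\ref{Lem:Increasing} gives $\gon_{r+1}(C) > \gon_r(C)$, so no line bundle of degree $\gon_r(C)$ can have rank $\geq r+1$. Hence $h^0(L) = r+1$, and the complete linear system $|L|$ induces a morphism $\varphi_L \colon C \to \PP^r$.

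The main step is to prove $L$ is very ample, and this is where the hypothesis $\gon_r(C) = \gon_{r-1}(C) + 1$ does the real work. If $L$ failed to separate two (possibly infinitely near) points $p, q \in C$, then by the standard cohomological criterion one would have
\[
h^0\bigl(L(-p-q)\bigr) \geq h^0(L) - 1 = r,
\]
so $L(-p-q)$ is a line bundle of degree $\gon_r(C) - 2$ and rank at least $r-1$. This would give $\gon_{r-1}(C) \leq \gon_r(C) - 2$, directly contradicting the hypothesis that $\gon_{r-1}(C) = \gon_r(C) - 1$. Therefore $L$ separates points and tangent vectors, so $\varphi_L$ is a closed embedding whose image is a smooth curve of degree $\deg(L) = \gon_r(C)$ in $\PP^r$.

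I expect the main obstacle to be notational rather than conceptual: the argument is essentially a one-line application of geometric Riemann--Roch together with the strict monotonicity of the gonality sequence, so the chief care is in correctly translating between the Baker--Norine rank and $h^0$ on the algebraic curve side, and in handling the case $p=q$ (tangent vectors) on the same footing as $p\neq q$ via the divisor $L(-2p)$.
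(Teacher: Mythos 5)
Your proof is correct, but it takes a genuinely different route from the paper's. The paper factors $\varphi_{\cL}$ through the normalization $\widetilde{B}$ of its image and argues in two separate steps: first that $\varphi \colon C \to \widetilde{B}$ has degree $1$ (by pulling back $\cO_B(1)(-p)$ and comparing degrees), and second that the image $B$ is smooth (by projecting from a putative singular point to get a nondegenerate map to $\PP^{r-1}$ of degree at most $\gon_r(C)-2$). You instead verify very ampleness of $L$ directly via the standard separation criterion $h^0(L(-p-q)) = h^0(L) - 2$ for all (possibly coincident) $p,q$, which handles injectivity, immersion, and smoothness of the image in one stroke. Both arguments bottom out in the identical numerical obstruction: any failure would manufacture a $g^{r-1}_{d}$ with $d \leq \gon_r(C)-2 < \gon_{r-1}(C)$. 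Your version is arguably the cleaner textbook argument; the paper's version is more geometric and makes visible exactly which degeneration (multiple cover versus singular image) is being excluded, which foreshadows the projection arguments it reuses in Lemma~\ref{Lem:SpaceCurves}. Your preliminary step pinning down $h^0(L) = r+1$ via strict monotonicity of the gonality sequence is correct and worth keeping, since it guarantees the complete linear series maps to $\PP^r$ rather than a larger projective space.
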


\begin{proof}
Let $\cL$ be a line bundle on $C$ of rank $r$ and degree $\gon_r (C)$.  Let $\varphi_{\cL} \colon C \to \PP^r$ be the map given by the complete linear series of $\cL$, let $B = \varphi_{\cL} (C)$ be the image, let $\nu \colon \widetilde{B} \to B$ be the normalization of $B$, and let $\varphi \colon C \to \widetilde{B}$ be the induced map.

We first show that the map $\varphi$ has degree 1, and is therefore an isomorphism.  For any point $p \in \widetilde{B}$, the line bundle $\nu^* \cO_B (1)(-p)$ has rank at least $r-1$ on $\widetilde{B}$.  Thus, $\varphi^* \nu^* \cO_B (1)(-p)$ has rank at least $r-1$ on $C$.  But
\[
\deg (\varphi^* \nu^* \cO_B (1)(-p)) = \deg (\cL) - \deg (\varphi) = \gon_r (C) - \deg (\varphi) .
\]
Since $\gon_{r-1} (C) = \gon_r (C) - 1$, it follows that $\deg (\varphi) = 1$.

We now show that the map $\nu$ is an isomorphism.  If not, then $B$ is singular, and projection from a singular point yields a nondegenerate map to $\PP^{r-1}$ of degree at most $\gon_r (C) - 2$.  Since $\gon_{r-1} (C) = \gon_r (C) - 1$, this is again impossible.  It follows that the map $\varphi_{\cL}$ is an isomoprhism onto its image.
\end{proof}

Lemma~\ref{Lem:EmbeddedCurve} has several consequences.

\begin{lemma}
\label{Lem:PlaneCurves}
Let $C$ be a curve with the property that $\gon_2 (C) = \gon_1 (C) + 1$.  Then the genus of $C$ is $g = {{\gon_1 (C)}\choose{2}}$ and, for any $r<g$, we have
\[
\gon_r (C) = k \cdot \gon_2 (C) - h,
\]
where $k$ and $h$ are the uniquely determined integers with $1 \leq k \leq \gon_2(C) - 3$, $0 \leq h \leq k$, such that $r = \frac{k(k+3)}{2} - h$.

In particular, if $\gon_1 (C) \geq 2$, then $\gon_3 (C) = 2 \cdot \gon_1 (C)$.
\end{lemma}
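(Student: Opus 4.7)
The plan is to reduce everything to the geometry of a smooth plane curve via Lemma~\ref{Lem:EmbeddedCurve}. First I would apply that lemma with $r = 2$ to the hypothesis $\gon_2(C) = \gon_1(C)+1$, realizing $C$ as a smooth plane curve in $\PP^2$ of degree $d := \gon_2(C) = \gon_1(C)+1$. The genus-degree formula for smooth plane curves then gives the first claim immediately:
\[
g = \binom{d-1}{2} = \binom{\gon_1(C)}{2}.
\]

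For the gonality-sequence formula I would work in two halves. For the upper bound, I would construct the required linear series by restricting line bundles from $\PP^2$. For each $k$ with $1 \leq k \leq d-3$, the line bundle $\cL_k := \cO_{\PP^2}(k)|_C$ has degree $kd$, and standard cohomology on the ideal sequence
\[
0 \to \cO_{\PP^2}(k-d) \to \cO_{\PP^2}(k) \to \cL_k \to 0,
\]
together with $h^0(\cO_{\PP^2}(k-d)) = 0$ (since $k-d \leq -3$) and the vanishing of $h^1$ for line bundles on $\PP^2$, shows that $h^0(\cL_k) = \binom{k+2}{2}$, so $\rk(\cL_k) = \frac{k(k+3)}{2}$. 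Removing $h$ general points produces a divisor of degree $kd - h$ and rank at least $\frac{k(k+3)}{2} - h$, giving $\gon_r(C) \leq k\cdot\gon_2(C) - h$ for $r = \frac{k(k+3)}{2} - h$, $0 \leq h \leq k$.

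The matching lower bound is where the real work lives and is the main obstacle. Here I would appeal to the classical theorem of M.~Noether on linear series on smooth plane curves (see, e.g., Arbarello-Cornalba-Griffiths-Harris, \emph{Geometry of Algebraic Curves~I}): on a smooth plane curve of degree $d \geq 4$, every $g^r_n$ is cut out by adjoint curves, and the minimum degree of a linear series of given rank is exactly $kd - h$ with $r = \frac{k(k+3)}{2} - h$. Locating and formulating the right version of this classical statement is the subtle step; the bound itself is not elementary and ultimately rests on a careful analysis of special divisors (or equivalently secant geometry) on $C$.

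Finally, part (3) is the special case $r = 3$, corresponding to $k = 2$ and $h = 2$, giving $\gon_3(C) = 2d - 2 = 2\gon_1(C)$. The degenerate cases $\gon_1(C) \in \{2,3\}$ (where $k=2$ exceeds $d-3$) I would dispatch by direct appeal to Riemann-Roch: a smooth plane cubic is elliptic with $\gon_3 = 4$, and a smooth plane quartic has genus $3$ with $\gon_3 = 6 = 2\cdot 3$; both agree with $2\gon_1(C)$.
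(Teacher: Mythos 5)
Your proposal follows essentially the same route as the paper: apply Lemma~\ref{Lem:EmbeddedCurve} with $r=2$ to realize $C$ as a smooth plane curve of degree $\gon_2(C)$, read off the genus from the genus--degree formula, and invoke the classical computation (due to M.~Noether, and revisited by Hartshorne) of the gonality sequence of smooth plane curves for the main formula. The extra detail you supply --- the explicit upper bound via restriction of $\cO_{\PP^2}(k)$ and the direct check of the low-degree cases $\gon_1(C)\in\{2,3\}$ for the $\gon_3$ statement --- is correct and fills in what the paper leaves to the cited references.
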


\begin{proof}
By Lemma~\ref{Lem:EmbeddedCurve}, $C$ is isomorphic to a smooth plane curve of degree $\gon_2 (C)$.  The genus of such a curve is ${{\gon_1 (C)}\choose{2}}$, and its gonality sequence is computed in \cite{Noether82, Hartshorne86}.
\end{proof}

\begin{lemma}
\label{Lem:SpaceCurves}
Let $C$ be a curve with the property that $\gon_3 (C) = \gon_2 (C) + 1$, and let $m = \lceil \frac{1}{2} \gon_2 (C) \rceil$.  Then the genus of $C$ is at most $m \cdot \gon_3 (C) - m(m+2)$.  Moreover, if equality holds, then 
\[
\gon_1 (C) = \Big\lceil \frac{1}{2} (\gon_3 (C) - 1) \Big\rceil.
\]
\end{lemma}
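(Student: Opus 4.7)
The plan is to use Lemma~\ref{Lem:EmbeddedCurve} to realize $C$ as a smooth nondegenerate curve of degree $d := \gon_3(C)$ in $\PP^3$, and then invoke the classical Castelnuovo bound on the genus of space curves. Applying Lemma~\ref{Lem:EmbeddedCurve} with $r = 3$ is immediate from the hypothesis $\gon_3(C) = \gon_2(C) + 1$, so the genus of $C$ equals that of a smooth nondegenerate curve of degree $d$ in $\PP^3$.

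Castelnuovo's theorem asserts $g \leq \binom{m_0}{2} \cdot 2 + m_0 \epsilon$, where $d - 1 = 2m_0 + \epsilon$ with $\epsilon \in \{0, 1\}$. A direct algebraic manipulation, splitting on the parity of $d$, identifies this upper bound with $m \cdot \gon_3(C) - m(m+2)$ for $m = \lceil \gon_2(C)/2 \rceil$, which proves the first assertion.

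For the moreover part, Castelnuovo further classifies extremal curves: they lie on a surface of minimal degree in $\PP^3$, which must be a quadric $S$. If $S$ is smooth, then $C$ has bidegree $(a,b)$ on $S \cong \PP^1 \times \PP^1$ with $a + b = d$ and genus $(a-1)(b-1)$, and extremality forces $|a - b| \leq 1$. The two rulings of $S$ restrict to pencils on $C$ of degrees $a$ and $b$, giving $\gon_1(C) \leq \min(a, b) = \lceil (d-1)/2 \rceil$. For the matching lower bound I would cite the classical result that the gonality of a smooth bidegree-$(a,b)$ curve with $2 \leq a \leq b$ on a smooth quadric is exactly $a$ (this can be seen, e.g., via Clifford's theorem applied to the residual of a putative smaller pencil against the hyperplane section). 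A parallel analysis handles the case in which the quadric is a cone.

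The main obstacle is less conceptual than bookkeeping: matching Castelnuovo's formula to the stated closed form requires careful tracking of the parity of $d$, and the extremal analysis must also cover singular (cone) quadrics separately from the smooth case. Neither presents substantive new difficulty beyond classical projective surface theory.
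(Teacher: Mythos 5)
Your strategy for the genus bound is the same as the paper's: apply Lemma~\ref{Lem:EmbeddedCurve} with $r=3$ and invoke Castelnuovo's bound for nondegenerate space curves. However, the ``direct algebraic manipulation'' you assert does not actually close. Write $d = \gon_3(C)$, so $m = \lceil \tfrac{1}{2}(d-1)\rceil$. For $d = 2k$ your form of Castelnuovo gives $g \leq \binom{k-1}{2}\cdot 2 + (k-1) = (k-1)^2$, while $m = k$ and $m d - m(m+2) = k^2 - 2k = (k-1)^2 - 1$; the two expressions agree only for odd $d$ (both equal $k(k-1)$ when $d = 2k+1$). So the identification you claim to have verified is false in half the cases, and in fact the statement as written fails there: the elliptic normal quartic has $\gon_1 = 2$, $\gon_2 = 3$, $\gon_3 = 4$, genus $1$, but $m\cdot \gon_3 - m(m+2) = 0$; a non-hyperelliptic canonical curve of genus $4$ has $(\gon_1,\gon_2,\gon_3) = (3,5,6)$ and genus $4 > 3$. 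The formula is correct if $m$ is taken to be $\lfloor \tfrac{1}{2}\gon_2(C)\rfloor$. The paper's proof shares this defect (it cites Hartshorne and asserts the formula without checking parities), but since your write-up claims the bookkeeping works out, you should actually do it and flag the discrepancy rather than smooth it over.

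For the ``moreover'' clause your route genuinely differs from the paper's and is substantially heavier. The paper gets the upper bound by intersecting $C$ with a tangent plane $H$ of the quadric $Q$: all $\gon_3(C)$ points of $H\cap C$ lie on the two lines of $H\cap Q$, so one line meets $C$ in at least $\tfrac{1}{2}\gon_3(C)$ points, and projection from that line has degree at most $\tfrac{1}{2}\gon_3(C)$ --- an argument that treats the smooth quadric and the cone uniformly, whereas your ruling argument needs the cone as a separate case and a classification of extremal bidegrees. More importantly, for the lower bound you cite the exact gonality of bidegree-$(a,b)$ curves on a quadric; that is a genuine theorem, but your parenthetical Clifford sketch is not a proof of it, and it is unnecessary. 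The hypothesis hands you the lower bound for free: $\gon_2(C) \leq 2\gon_1(C)$ (square a pencil), so $\gon_1(C) \geq \tfrac{1}{2}\gon_2(C) = \tfrac{1}{2}(\gon_3(C)-1)$, and integrality combined with the upper bound pins $\gon_1(C) = \lceil \tfrac{1}{2}(\gon_3(C)-1)\rceil$. Replacing your citation with this two-line observation eliminates the cone case, the extremal bidegree analysis, and the external gonality theorem all at once.
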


\begin{proof}
By Lemma~\ref{Lem:EmbeddedCurve}, $C$ is isomorphic to a smooth space curve of degree $\gon_3 (C)$.  By \cite[Theorem~IV.6.7]{Hartshorne77}, the genus of $C$ is at most $m \cdot \gon_3 (C) - m(m+2)$, and if equality holds, then $C$ is contained in a quadric surface.  A tangent plane to the quadric meets it in two lines, which meet the curve $C$ in $\gon_3 (C)$ points.  It follows that one of these two lines must meet $C$ in at least $\frac{1}{2} \gon_3 (C)$ points, and projection from this line yields a nondegenerate map to $\PP^1$ of degree at most $\frac{1}{2} \gon_3 (C)$. Thus,
\[
\gon_1 (C) \leq \frac{1}{2} \gon_3 (C) .
\]
On the other hand, we have
\[
\gon_1 (C) \geq \frac{1}{2} \gon_2 (C) = \frac{1}{2} (\gon_3 (C) -1),
\]
and the result follows.
\end{proof}

\begin{question}
\label{Q:SpaceGraphs}
Let $G$ be a graph with the property that $\gon_3 (G) = \gon_2 (G) + 1$, and let $m = \lceil \frac{1}{2} \gon_2 (G) \rceil$.
\begin{enumerate}
\item Must the genus of $G$ be at most $m \cdot \gon_3 (G) - m(m+2)$?
\item If equality holds, is it true that 
\[
\gon_1 (C) = \Big\lceil \frac{1}{2} (\gon_3 (C) - 1) \Big\rceil ?
\]
\end{enumerate}
\end{question}

\begin{lemma}
\label{Lem:SpaceCurveGonality}
Let $C$ be a curve.  If $\gon_3 (C) \leq \gon_1 (C) + 3$, then $\gon_1 (C) \leq 6$ and $\gon_1 (C) \neq 5$.  
\end{lemma}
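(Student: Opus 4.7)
By Lemma~\ref{Lem:Increasing} the hypothesis forces $\gon_3(C) - \gon_1(C) \in \{2,3\}$, so my plan is to split into three cases based on the pair $(\gon_2(C) - \gon_1(C),\, \gon_3(C) - \gon_1(C))$. In the two ``plane'' cases where $\gon_2(C) = \gon_1(C) + 1$, Lemma~\ref{Lem:PlaneCurves} gives $\gon_3(C) = 2\gon_1(C)$ whenever $\gon_1(C) \geq 2$; comparing with $\gon_3(C) \in \{\gon_1(C)+2,\, \gon_1(C)+3\}$ pins $\gon_1(C)$ down to $2$ or $3$ respectively. The case $\gon_1(C) = 1$ is trivial because rational curves automatically satisfy the conclusion.

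The remaining ``space'' case is $\gon_2(C) = \gon_1(C) + 2$ and $\gon_3(C) = \gon_1(C) + 3$, so $\gon_3(C) = \gon_2(C) + 1$, and Lemma~\ref{Lem:SpaceCurves} embeds $C$ as a smooth nondegenerate curve of degree $n+3$ in $\PP^3$, where $n := \gon_1(C)$. This provides an upper bound on the genus $g$ of $C$, while the Brill--Noether inequality $g \geq 2n - 3$ provides a matching lower bound. For $n = 5$ these two bounds pin $g$ to a short interval; applying Riemann--Roch to the $g^3_{n+3}$ realizing $\gon_3(C)$ produces a residual series $|K-D|$ whose rank and degree force either $\gon_1(C) \leq 4$ or $\gon_2(C) \leq 6$ at every interior genus, and the top of the interval is eliminated by the equality clause of Lemma~\ref{Lem:SpaceCurves}, which would otherwise force $\gon_1(C) = 4$.

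To handle $n \geq 7$, my plan is to exploit the $g^2_{n+2}$ that realizes $\gon_2(C)$. Since $h^0$ of any rank-$2$ series of degree $n+2$ must equal $3$ (else $\gon_3(C) \leq n+2$), its complete linear system defines a map $\varphi\colon C \to \PP^2$ of some degree $k$ onto a plane curve of degree $e$ with $ke = n+2$, and projecting the image from a smooth point pulls back to a $g^1_{k(e-1)} = g^1_{n+2-k}$ on $C$; the condition $\gon_1(C) = n$ therefore forces $k \leq 2$. Combining the resulting plane-model constraints with the Castelnuovo-type bound of Lemma~\ref{Lem:SpaceCurves} and with Riemann--Roch applied to the $g^3_{n+3}$ and the $g^2_{n+2}$ should yield a contradiction in each remaining subcase. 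The main obstacle will be the subcase $k=1$, where $C$ is merely birational to a plane curve of degree $n+2$ and the embedding in $\PP^3$ must be used in a more delicate way; here I expect to use the trisecant reformulation of pencils of degree $n$ as $|H - E|$ with $E$ a trisecant divisor on $C$, together with the classical trisecant count, to rule out the narrow range of genera that survive the preceding bounds.
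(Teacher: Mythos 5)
Your reduction to three cases and your treatment of the two plane cases (via Lemma~\ref{Lem:PlaneCurves}, giving $\gon_1(C) \leq 3$) match the paper, and your analysis of the degree-$8$ space curve (the bounds $7 \leq g \leq 9$, the residual series $|K-D|$ of the $g^3_8$, and the extremal case handled by the equality clause of Lemma~\ref{Lem:SpaceCurves}) is essentially the paper's argument for excluding $\gon_1(C) = 5$. The genuine gap is the range $n = \gon_1(C) \geq 7$, i.e.\ smooth space curves of degree $n+3 \geq 10$. The paper disposes of this in one line by citing \cite[Proposition~4.1]{HartshorneSchlesinger11}: a smooth curve of degree $d \geq 10$ in $\PP^3$ satisfies $\gon_1 \leq d-4$. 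That is a substantive theorem about the gonality of space curves, not a routine consequence of Riemann--Roch and Castelnuovo, and your proposed replacement does not prove it.

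Concretely: in your $k=2$ subcase, $C$ is a double cover of a plane curve $B$ of degree $(n+2)/2$, and $B$ is forced to be smooth (projection from a singular point would give a pencil of degree $<n$); but then both projection from a smooth point of $B$ and composition with a minimal pencil on $B$ produce only $g^1_n$'s on $C$, which is consistent with $\gon_1(C)=n$, so no contradiction falls out of the plane-model constraints together with the Castelnuovo-type bound of Lemma~\ref{Lem:SpaceCurves}. In the $k=1$ subcase, which you yourself flag as the main obstacle, $C$ is only birational to a singular plane curve of degree $n+2$, and controlling its gonality requires a real analysis of the singularities or of the trisecant lines of the space model; the appeal to ``the classical trisecant count'' is not developed into an argument. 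Unless you intend to reprove the Hartshorne--Schlesinger bound, you should cite it here as the paper does; as written, the case $\gon_1(C) \geq 7$ is not established.
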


\begin{proof}
Suppose that $\gon_3 (C) \leq \gon_1 (C) + 3$.  Then either $\gon_2 (C) = \gon_1 (C) + 1$ or $\gon_3 (C) = \gon_2 (C) + 1$.  If $\gon_2 (C) = \gon_1 (C) + 1$, then by Lemma~\ref{Lem:PlaneCurves}, 
\[
2 \gon_1 (C) = \gon_3 (C) \leq \gon_1 (C) + 3,
\]
hence $\gon_1 (C) \leq 3$.

If $\gon_3 (C) = \gon_2 (C) + 1$, then by Lemma~\ref{Lem:EmbeddedCurve}, $C$ is isomorphic to a smooth space curve of degree $\gon_3 (C)$.  By \cite[Proposition~4.1]{HartshorneSchlesinger11}, if $\gon_3 (C) \geq 10$, then $\gon_3 (C) \geq \gon_1 (C) + 4$, hence we must have $\gon_3 (C) \leq 9$.

It remains to show that, if $\gon_3 (C) = 8$, then $\gon_1 (C) \leq 4$.  Since every curve of genus 6 or less has gonality at most 4, we may assume that $C$ has genus at least 7.  By Lemma~\ref{Lem:SpaceCurves}, if $\gon_3 (C) = 8$, then $C$ has genus at most 9, and if it is equal to 9, then $\gon_1 (C) \leq 4$.  If $C$ has genus 8, then $\cO_C(2)$ has degree $16 > 2 \cdot 8 - 2$, hence $h^0 (C, \cO_C (2)) = 9$.  It follows that $C$ is contained in a quadric surface, and again, $\gon_1 (C) \leq \frac{1}{2}\gon_3 (C) = 4$.  Finally, if $C$ has genus 7, then by Riemann-Roch, $K_C \otimes \cO_C (-1)$ has degree 4 and rank 1, hence $\gon_1 (C) \leq 4$.
\end{proof}

\begin{question}
\label{Q:SpaceCurveGonality}
Let $G$ be a graph.  If $\gon_1 (G) = 5$ or $\gon_1 (G) \geq 7$, does it follow that $\gon_3 (G) \geq \gon_1 (G) + 4$?
\end{question}

\bibliography{math}

\end{document}